\newtheorem{thm}{Theorem}[section]
\newtheorem{cor}[thm]{Corollary}
\newtheorem{lem}[thm]{Lemma}
\newtheorem{exa}[thm]{Example}
\theoremstyle{definition}
\newtheorem{defn}[thm]{Definition}
\theoremstyle{remark}
\newtheorem{rem}[thm]{Remark}
\numberwithin{equation}{section}
\begin{document}
\vbox{\vskip 3.5cm}
\begin{center}
{\bf {\large Weakly singular integral inequalities and global solutions for fractional differential equations of  Riemann-Liouville type}}
\end{center}
\title{}
\maketitle

\bigskip

\begin{center}
Tao Zhu\\
\bigskip
Department of Mathematics and Physics, Nanjing
Institute of Technology,\\
Nanjing, 211100, P. R. China \\
\end{center}

\baselineskip=20pt

\bigskip

\bigskip
\begin{abstract}
In this paper, we obtain some new results about weakly singular integral inequalities. These inequalities are used
to discuss the global existence and uniqueness results for fractional differential equations of  Riemann-Liouville type. Some examples are provided to illustrate the applicability of our main results.\\
Keywords: Weakly singular integral inequalities; Fixed point theorem;
Fractional differential equations; Riemann-Liouville fractional derivative.\\
MSC2010: 39B62; 34A08; 26A33.
\end{abstract}
\bigskip

\section{Introduction}
\bigskip
In 1981, Henry [6] studied the following linear integral
inequality
\begin{equation}
u(t)\leq a(t)+b\int_{0}^{t}(t-s)^{\beta-1}u(s)ds.
\end{equation}
Another version of a weakly singular result of Henry [6] is the following
\begin{equation}
u(t)\leq
at^{\alpha-1}+b\int_{0}^{t}(t-s)^{\beta-1}s^{\gamma-1}u(s)ds.
\end{equation}
Since weakly singular integral inequalities are a well-known tool in proving the existence, uniqueness and stability of integral equations, evolution equations and fractional differential equations, many scholars have began to study weakly singular integral inequalities and obtained a number of versions of weakly singular integral inequalities (for example, see [2,4,5,6,8,9,11,15,17,18,19,20]). Recently, Medve\v{d} [8,9] studied the following nonlinear integral inequality
\begin{equation}
u(t)\leq
a(t)+\int_{0}^{t}(t-s)^{\beta-1}l(s)\omega(u(s))ds,
\qquad t\in[0,T) (0<T\leq+\infty).
\end{equation}
Zhu [19] used the same method as in [8,9] to improve the proof of the inequality (1.3).
By a new method, Zhu [20] investigated the following integral inequality
\begin{equation}
u(t)\leq a(t)+b(t)(\int_{0}^{t}l^{p}(s)u^{p}(s)ds)^{\frac{1}{p}},
\qquad t\in[0,\infty).
\end{equation}
The inequality (1.4) was first studied by Willett in [16] by using the Minkowski inequality.
The aim of this paper is to continue the research in this field.
We first deal with the following nonlinear integral
inequality
\begin{equation}
u(t)\leq a(t)+b(t)(\int_{0}^{t}l(s)\omega(u(s))ds)^{\frac{1}{p}},
\qquad t\in[0,T).
\end{equation}
As far as I know, there have been few papers to study the integral inequality (1.5). Then using the integral inequality (1.5), we study the following weakly singular integral inequality
\begin{equation}
u(t)\leq
a(t)+b(t)\int_{0}^{t}(t-s)^{\beta-1}l(s)\omega(u(s))ds,
\qquad t \in[0,T),
\end{equation}
and the weakly singular integral inequality
\begin{equation}
u(t)\leq
at^{-\alpha}+bt^{-\delta}\int_{0}^{t}(t-s)^{\beta-1}f(s,u(s))ds,
\qquad t\in(0,T),
\end{equation}
where $a, b\geq 0$, $\alpha>\delta\geq 0$ and $0<\beta<1$.

As applications of our results, we present the existence and uniqueness results
of the following fractional differential equation
\begin{equation}
\begin{cases}
D_{r}^{\beta}x(t)=f(t,x(t))  \qquad \beta \in(0,1),\\
\lim_{t\rightarrow 0^{+}}t^{1-\beta}x(t)=x_{0},
\end{cases}
\end{equation}
where $D_{r}^{\beta}$ is the Riemann-Liouville fractional
derivative. In the recent years, many scholars have devoted to the study of the fractional differential
equation (1.8). For example, Trif [14] studied the global existence of solutions for the fractional differential
equations (1.8)  when
$f(t,x)\leq p(t)|x|+q(t)$, where $p\in C_{\alpha}(0,+\infty)$ and
$q\in C_{1-\beta}(0,+\infty)$ with $0\leq\alpha<\beta$ and
$2\beta-\alpha>1$. Webb [15] obtained the existence results of the fractional differential equation (1.8) under the assumption of nonnegative function $f(t,x)=t^{-\gamma}g(t,x)$, where $g(t,x)\leq M(1+x)$, $M>0$ and $0\leq\gamma<\beta$. Zhu [20] presented the existence results
of the fractional differential equation (1.8) when $|f(t, x)|\leq l(t)|x|+k(t)$ with $t^{\beta-1}l(t)\in C(0,+\infty)\bigcap L_{Loc}^{p}[0,+\infty)$ and
$k(t)\in C(0,+\infty)\bigcap L_{Loc}^{p}[0,+\infty)$ ($p>\frac{1}{\beta}$).

In this paper, by fixed point theorem and weakly singular integral inequality, we prove the existence of global solutions of the fractional differential
equation (1.8) when $|f(t, x)|\leq l(t)\omega(t^{1-\beta}|x|)$, where $l(t)\in C(0,+\infty)$ with $t^{1-\beta}l(t)\in L^{p}_{Loc}[0,+\infty)$ ($p>\frac{1}{\beta}$) and nonnegative
nondecreasing function $\omega\in C[0,+\infty)$ with $\lim_{t\rightarrow+\infty}\frac{t}{\omega(t)}=K (0<K\leq+\infty)$. Applying the above conclusion, we can easily obtain the following results. If $|f(t,x)|\leq l(t)|x|^{\gamma}+k(t)$, where $t^{(1-\gamma)(1-\beta)}l(t)\in
C(0,+\infty)\bigcap L^{p}_{Loc}[0,+\infty)$ and $t^{1-\beta}k(t)\in
C(0,+\infty)\bigcap L^{p}_{Loc}[0,+\infty)$$(0<\gamma\leq1, p>\frac{1}{\beta})$, then the fractional differential equation (1.8) has at least one
global solution in $C_{1-\beta}(0,+\infty)$. We also prove a unique solution of the fractional differential
equation (1.8) when $|f(t,x)-f(t,y)|\leq l(t)|x-y|$, where $l(t)\in
C(0,+\infty)\bigcap L^{p}_{Loc}[0,+\infty)$ and $t^{1-\beta}|f(t,0)|\in
C(0,+\infty)\bigcap L^{p}_{Loc}[0,+\infty)$$(p>\frac{1}{\beta})$. Our results improve and generalize some of the results in [14,15,20]. Finally, we will give some examples to illustrate  the applicability of our results.

\section{weakly singular integral inequalities}
In this section, we first study the nonlinear integral
inequality (1.5). Then we discuss the weakly singular integral inequalities (1.6) and (1.7) by the integral
inequality (1.5).

\begin{thm}
Let $p\geq 1$, let $a(t), b(t)\in C[0,T)$ $(0<T\leq+\infty)$ be nonnegative, nondecreasing
functions, let $l(t)\in C(0,T)\bigcap L^{1}_{Loc}[0,T)$ be a nonnegative function, and let $\omega\in C[0,+\infty)$ be a nondecreasing, nonnegative function.
Assume that $u(t)$ is a
continuous and nonnegative function on $[0,T)$ with
\begin{equation} u(t)\leq
a(t)+b(t)(\int_{0}^{t}l(s)\omega(u(s))ds)^{\frac{1}{p}},
\qquad t \in [0,T).
\end{equation}
Then
\begin{equation}
\begin{split}
u(t)&\leq
2^{1-\frac{1}{p}}\left(\Omega^{-1}\left(\Omega(a^{p}(t))+b^{p}(t)\int_{0}^{t}l(s)ds\right)\right)^{\frac{1}{p}},
\qquad t\in[0,T_{1}],
\end{split}
\end{equation}
where $\Omega(x)=\int_{1}^{x}\frac{1}{\mu(t)}dt$,
$\mu(t)=\omega(2^{1-\frac{1}{p}}t^{\frac{1}{p}})$,
$\Omega^{-1}$ is the inverse of $\Omega$, and $T_{1}\in (0,T)$ is
such that $\Omega(a^{p}(t))+b^{p}(t)\int_{0}^{t}l(s)ds\in
\mathrm{Dom}(\Omega^{-1})$ for all $t\in [0,T_{1}]$.
\end{thm}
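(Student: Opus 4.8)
The plan is to reduce the inequality (2.1) to a Bihari-type differential inequality by first removing the outer $p$-th root. Since $p \geq 1$, the map $t \mapsto t^p$ is convex, so the elementary estimate $(x+y)^p \leq 2^{p-1}(x^p + y^p)$ holds for all $x, y \geq 0$. Applying this with $x = a(t)$ and $y = b(t)\left(\int_0^t l(s)\omega(u(s))\,ds\right)^{1/p}$ and raising (2.1) to the $p$-th power gives $u^p(t) \leq 2^{p-1}\left(a^p(t) + b^p(t)\int_0^t l(s)\omega(u(s))\,ds\right)$, and hence $u(t) \leq 2^{1-1/p}\left(a^p(t) + b^p(t)\int_0^t l(s)\omega(u(s))\,ds\right)^{1/p}$. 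Because $\omega$ is nondecreasing, this bound will let me replace $\omega(u(t))$ by $\mu$ evaluated at the quantity in parentheses, where $\mu(x) = \omega(2^{1-1/p}x^{1/p})$ is exactly the function appearing in the statement.

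The main obstacle is that the coefficients $a(t)$ and $b(t)$ depend on $t$, so the quantity in parentheses is not directly a function whose derivative can be controlled in the usual Bihari manner. To get around this I would freeze the coefficients at the right endpoint. Fix an arbitrary $\tau \in [0, T_1]$ and, for $t \in [0,\tau]$, set $v(t) = a^p(\tau) + b^p(\tau)\int_0^t l(s)\omega(u(s))\,ds$. Since $a$ and $b$ are nondecreasing, $a^p(t) \leq a^p(\tau)$ and $b^p(t) \leq b^p(\tau)$ for $t \leq \tau$, so the previous paragraph gives $u^p(t) \leq 2^{p-1}v(t)$ and therefore $\omega(u(t)) \leq \omega\!\left(2^{1-1/p}v(t)^{1/p}\right) = \mu(v(t))$ on $[0,\tau]$.

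With the coefficients frozen, $v$ is absolutely continuous on $[0,\tau]$ with $v(0) = a^p(\tau)$ and, for a.e.\ $t$, $v'(t) = b^p(\tau)l(t)\omega(u(t)) \leq b^p(\tau)l(t)\mu(v(t))$. Dividing by $\mu(v(t))$ (positive on the relevant range, as guaranteed by the hypothesis that the argument of $\Omega^{-1}$ lies in its domain) and integrating from $0$ to $\tau$, the left-hand side equals $\Omega(v(\tau)) - \Omega(v(0))$ by the substitution defining $\Omega(x) = \int_1^x \frac{1}{\mu(t)}\,dt$. This yields $\Omega(v(\tau)) \leq \Omega(a^p(\tau)) + b^p(\tau)\int_0^\tau l(s)\,ds$. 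As $\Omega$ is strictly increasing, so is $\Omega^{-1}$, and applying it gives $v(\tau) \leq \Omega^{-1}\!\left(\Omega(a^p(\tau)) + b^p(\tau)\int_0^\tau l(s)\,ds\right)$. Combining with $u^p(\tau) \leq 2^{p-1}v(\tau)$ and recalling that $\tau \in [0,T_1]$ was arbitrary, I replace $\tau$ by $t$ to obtain precisely (2.2). The delicate points to verify are the positivity of $\mu$ along the integration path (so that $\Omega$ and the separation of variables are legitimate) and the absolute continuity of $v$, which follows from $l \in L^{1}_{Loc}[0,T)$; should $\omega$ vanish, one would first replace $\omega$ by $\omega + \varepsilon$ and let $\varepsilon \to 0^+$.
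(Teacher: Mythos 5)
Your proof is correct and follows essentially the same route as the paper's: the paper also freezes the coefficients at the right endpoint (its $T_{0}$ is your $\tau$), sets $\nu(t)=b^{p}(T_{0})\int_{0}^{t}l(s)\omega(u(s))ds$ (so your $v=a^{p}(T_{0})+\nu$), uses $a+\nu^{1/p}\leq 2^{1-1/p}(a^{p}+\nu)^{1/p}$, and integrates $\nu'/\mu(a^{p}(T_{0})+\nu)\leq b^{p}(T_{0})l$ to invoke $\Omega$. Your closing remarks on the positivity of $\mu$ and the $\varepsilon$-regularization merely make explicit points the paper leaves implicit.
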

\begin{proof}Fix any $T_{0}\in[0,T_{1}]$. Then for $t\in[0,T_{0}]$, from (2.1), we have
\begin{equation}
u(t)\leq a(T_{0})+b(T_{0})(\int_{0}^{t}l(s)\omega(u(s))ds)^{\frac{1}{p}}.
\end{equation}
Let $\nu(t)=b^{p}(T_{0})\int_{0}^{t}l(s)\omega(u(s))ds$,
then we can get
\begin{equation}
\begin{split}
\nu^{'}(t)&= b^{p}(T_{0})l(t)\omega(u(t))\\
&\leq b^{p}(T_{0})l(t)\omega(a(T_{0})+\nu^{\frac{1}{p}}(t))\\
&\leq b^{p}(T_{0})l(t)\omega(2^{1-\frac{1}{p}}(a^{p}(T_{0})+\nu(t))^{\frac{1}{p}})\\
&= b^{p}(T_{0})l(t)\mu(a^{p}(T_{0})+\nu(t)).\\
\end{split}
\end{equation}
This yields
\begin{equation}
\frac{(a^{p}(T_{0}))^{'}+\nu^{'}(t)}{\mu(a^{p}(T_{0})+\nu(t))}=\frac{\nu^{'}(t)}{\mu(a^{p}(T_{0})+\nu(t))}\leq b^{p}(T_{0})l(t),\\
\end{equation}
or
\begin{equation}
\begin{split}
\frac{d}{dt}\Omega(a^{p}(T_{0})+\nu(t))&\leq b^{p}(T_{0})l(t).
\end{split}
\end{equation}
Integrating the inequality (2.6) from $0$ to $t\in[0,T_{0}]$, we obtain
\begin{equation}
\begin{split}
\Omega(a^{p}(T_{0})+\nu(t))&\leq \Omega(a^{p}(T_{0}))+\int_{0}^{t}b^{p}(T_{0})l(s)ds,
\end{split}
\end{equation}
then
\begin{equation}
\begin{split}
a^{p}(T_{0})+\nu(t)&\leq
\Omega^{-1}\left(\Omega(a^{p}(T_{0}))+b^{p}(T_{0})\int_{0}^{t}l(s)ds\right), \qquad
t\in[0,T_{0}].
\end{split}
\end{equation}
So
\begin{equation}
\begin{split}
a^{p}(T_{0})+\nu(T_{0})&\leq
\Omega^{-1}\left(\Omega(a^{p}(T_{0}))+b^{p}(T_{0})\int_{0}^{T_{0}}l(s)ds\right).
\end{split}
\end{equation}
Then, from (2.1) and (2.9), we know
\begin{equation}
\begin{split}
u(T_{0})&\leq a(T_{0})+\nu^{\frac{1}{p}}(T_{0})\\
&\leq 2^{1-\frac{1}{p}}(a^{p}(T_{0})+\nu(T_{0}))^{\frac{1}{p}}\\
&\leq 2^{1-\frac{1}{p}}\left(\Omega^{-1}\left(\Omega(a^{p}(T_{0}))+b^{p}(T_{0})\int_{0}^{T_{0}}l(s)ds\right)\right)^{\frac{1}{p}}.\\
\end{split}
\end{equation}
Now replace $T_{0}$ by $t$ in the inequality (2.10), we obtain the following
result
\begin{equation}
u(t)\leq 2^{1-\frac{1}{p}}\left(\Omega^{-1}\left(\Omega(a^{p}(t)+b^{p}(t)\int_{0}^{t}l(s)ds\right)\right)^{\frac{1}{p}},  \qquad
t\in[0,T_{1}],
\end{equation}
where $T_{1}\in(0,T)$ is such that
$\Omega(a^{p}(t))+b^{p}(t)\int_{0}^{t}l(s)ds\in\mathrm{Dom}(\Omega^{-1})$ for all $t\in
[0,T_{1}]$.
\end{proof}

As a consequence of Theorem 2.1, we can obtain the following result.
\begin{cor}
Let $p\geq 1$ and $0<\gamma\leq1$, let $a(t)$, $b(t)\in C[0,+\infty)$ be nondecreasing, nonnegative functions, let $l(t)\in C(0,+\infty)\bigcap L^{1}_{Loc}[0,+\infty)$ be a nonnegative
function, and $u(t)$ be a
continuous, nonnegative function on $[0,+\infty)$ with
\begin{equation} u(t)\leq
a(t)+b(t)(\int_{0}^{t}l(s)u^{p\gamma}(s)ds)^{\frac{1}{p}},
\qquad t \in [0,+\infty).
\end{equation}
Then the following assertions holds:\\
If $\gamma=1$, then we have
\begin{equation}
\begin{split}
u(t)&\leq
2^{1-\frac{1}{p}}a(t)\exp\left(\frac{2^{p-1}b^{p}(t)}{p}\int_{0}^{t}l(s)ds\right),
\qquad t\in[0,+\infty).
\end{split}
\end{equation}
If $0<\gamma<1$, then we get
\begin{equation}
\begin{split}
u(t)&\leq
2^{1-\frac{1}{p}}\left(a^{p(1-\gamma)}(t)+(1-\gamma)2^{(p-1)\gamma}b^{p}(t)\int_{0}^{t}l(s)ds\right)^{\frac{1}{p(1-\gamma)}},
\qquad t\in[0,+\infty).
\end{split}
\end{equation}
\end{cor}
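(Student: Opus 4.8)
The plan is to derive Corollary 2.2 as a direct specialization of Theorem 2.1. The key observation is that the inequality (2.12) is exactly (2.1) with the choice $\omega(x)=x^\gamma$, since then $\int_0^t l(s)\omega(u(s))\,ds=\int_0^t l(s)u^{p\gamma}(s)\,ds$. This $\omega$ is nonnegative, nondecreasing and continuous on $[0,+\infty)$, so all hypotheses of Theorem 2.1 are met, and I may apply the conclusion (2.2) directly. The whole proof then reduces to computing the auxiliary functions $\mu$, $\Omega$, and $\Omega^{-1}$ explicitly for this power nonlinearity, substituting into (2.2), and simplifying.

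First I would record that $\mu(t)=\omega\bigl(2^{1-\frac1p}t^{\frac1p}\bigr)=\bigl(2^{1-\frac1p}t^{\frac1p}\bigr)^\gamma=2^{(1-\frac1p)\gamma}t^{\frac\gamma p}$. Then I would compute $\Omega(x)=\int_1^x \frac{dt}{\mu(t)}=2^{-(1-\frac1p)\gamma}\int_1^x t^{-\gamma/p}\,dt$, splitting into the two cases. When $\gamma/p<1$ (which always holds here since $\gamma\le 1\le p$, so $\gamma/p\le 1$ with equality only if $\gamma=p=1$), the antiderivative is a power; I would carry out the integration and then invert. The two displayed conclusions (2.13) and (2.14) correspond to whether the resulting exponent forces a logarithmic or a genuine power primitive.

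For the subcase, I expect the dividing line to sit at $\gamma=1$ rather than at $\gamma/p=1$. When $\gamma=1$ we have $\mu(t)=2^{1-\frac1p}t^{1/p}$ and $\Omega(x)=2^{-(1-\frac1p)}\int_1^x t^{-1/p}\,dt=2^{-(1-\frac1p)}\frac{p}{p-1}\bigl(x^{1-1/p}-1\bigr)$ for $p>1$; inverting this and inserting into (2.2), the algebra should collapse, after simplification of the constants, into the exponential bound (2.13). (The borderline $p=1$ is handled by continuity or by the direct Gronwall argument, where $\Omega=\log$ produces the exponential directly.) When $0<\gamma<1$, the integral $\int_1^x t^{-\gamma/p}\,dt$ gives $\frac{p}{p-\gamma}\bigl(x^{1-\gamma/p}-1\bigr)$, and I would then solve $\Omega^{-1}$ in closed form and track the constant $2^{(1-\frac1p)\gamma}$ carefully; substituting $a^p(t)$ for $x$ in the argument and combining exponents $1-\gamma/p=\frac{p-\gamma}{p}$ with the outer $\frac1p$ and the $a^{p(1-\gamma)}$ normalization should reproduce (2.14).

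The main obstacle is purely computational rather than conceptual: it is bookkeeping the various powers of $2$ and the exponents so that the raw output of $2^{1-\frac1p}\bigl(\Omega^{-1}(\cdots)\bigr)^{1/p}$ matches the clean right-hand sides claimed in (2.13) and (2.14). In particular I must verify that the constant $(1-\gamma)2^{(p-1)\gamma}$ in (2.14) and the factor $\frac{2^{p-1}}{p}$ in the exponent of (2.13) emerge correctly from $2^{-(1-\frac1p)\gamma}$, $\frac{p}{p-\gamma}$, and the final exponent $\frac{1}{p(1-\gamma)}$; a single misplaced power of $2$ or $p$ would break the identity, so the care lies entirely in that simplification.
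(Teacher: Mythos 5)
Your overall strategy --- specialize Theorem 2.1 to a power nonlinearity and compute $\mu$, $\Omega$, $\Omega^{-1}$ in closed form --- is exactly what the paper intends (it gives no separate proof, only the phrase ``as a consequence of Theorem 2.1''). But your choice of $\omega$ is wrong, and the error propagates through the whole computation. To make (2.12) an instance of (2.1) you need $\omega(u(s))=u^{p\gamma}(s)$, i.e.\ $\omega(x)=x^{p\gamma}$, not $\omega(x)=x^{\gamma}$; with your choice the integrand would be $l(s)u^{\gamma}(s)$, which is a different inequality. The correct substitution gives $\mu(t)=\omega\bigl(2^{1-\frac1p}t^{\frac1p}\bigr)=2^{(p-1)\gamma}t^{\gamma}$, so the integrand of $\Omega$ is a constant times $t^{-\gamma}$ and the dichotomy logarithm versus power falls exactly at $\gamma=1$: for $\gamma=1$ one gets $\Omega(x)=2^{-(p-1)}\ln x$, $\Omega^{-1}(y)=\exp(2^{p-1}y)$, and (2.2) collapses to the exponential bound (2.13); for $0<\gamma<1$ one gets $\Omega(x)=2^{-(p-1)\gamma}\frac{x^{1-\gamma}-1}{1-\gamma}$, and inverting yields $\bigl(a^{p(1-\gamma)}(t)+(1-\gamma)2^{(p-1)\gamma}b^{p}(t)\int_0^t l(s)\,ds\bigr)^{\frac{1}{1-\gamma}}$, whence (2.14) after taking the outer power $\frac1p$.

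With your $\omega(x)=x^{\gamma}$ the exponent inside $\Omega$ is $-\gamma/p$, which for $p>1$ is strictly greater than $-1$ even when $\gamma=1$; so $\Omega$ is always of power type, $\Omega^{-1}$ applied to an affine expression is again of power type, and no simplification can produce the exponential in (2.13). You sensed this tension (``I expect the dividing line to sit at $\gamma=1$ rather than at $\gamma/p=1$''), but the proposed resolution that ``the algebra should collapse'' into an exponential cannot occur. Likewise, in the case $0<\gamma<1$ your computation would give the outer exponent $\frac{1}{p-\gamma}$ rather than the claimed $\frac{1}{p(1-\gamma)}$. The fix is one line: replace $\omega(x)=x^{\gamma}$ by $\omega(x)=x^{p\gamma}$ and rerun your own computation; the constants $(1-\gamma)2^{(p-1)\gamma}$ and $\frac{2^{p-1}}{p}$ then emerge exactly as stated.
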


Now, we begin to study the  weakly singular integral inequalities (1.6) and (1.7) by the above Theorem 2.1.
\begin{thm}
Let $\beta\in(0,1)$, $p>\frac{1}{\beta}$ and $q=\frac{p}{p-1}$, let $a(t), b(t)\in C[0,T)$ be nondecreasing, nonnegative functions, let $l(t)\in C(0,T)\bigcap L^{p}_{Loc}[0,T)$ be a nonnegative
function, let $\omega\in C[0,+\infty)$ be a nondecreasing, nonnegative function, Assume that $u(t)$ is a
continuous, nonnegative function on $[0,T)$ with
\begin{equation}
u(t)\leq
a(t)+b(t)\int_{0}^{t}(t-s)^{\beta-1}l(s)\omega(u(s))ds,
\qquad t\in[0,T).
\end{equation}
Then
\begin{equation}
u(t)\leq 2^{1-\frac{1}{p}}\left(\Omega^{-1}\left(\Omega(a^{p}(t))+c^{p}(t)\int_{0}^{t}l^{p}(s)ds\right)\right)^{\frac{1}{p}},  \qquad
t\in[0,T_{1}],
\end{equation}
where $c(t)=\frac{t^{\beta-1+\frac{1}{q}}b(t)}{(q(\beta-1)+1)^{\frac{1}{q}}}$, $\Omega(x)=\int_{1}^{x}\frac{1}{\mu(t)}dt$,
$\mu(t)=\omega^{p}(2^{1-\frac{1}{p}}t^{\frac{1}{p}})$,
$\Omega^{-1}$ is the inverse of $\Omega$, and $T_{1}\in(0,T)$ is such that
$\Omega(a^{p}(t))+c^{p}(t)\int_{0}^{t}l^{p}(s)ds\in\mathrm{Dom}(\Omega^{-1})$ for all $t\in
[0,T_{1}]$.
\end{thm}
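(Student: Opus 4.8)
The plan is to reduce the weakly singular inequality~(2.15) to the non-singular inequality~(2.1) already handled in Theorem~2.1, by stripping the singular kernel $(t-s)^{\beta-1}$ off the integral with H\"older's inequality.

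First I would apply H\"older's inequality to the integral in~(2.15) with the conjugate exponents $q=\frac{p}{p-1}$ and $p$, pairing the kernel factor $(t-s)^{\beta-1}$ against $l(s)\omega(u(s))$:
\[
\int_{0}^{t}(t-s)^{\beta-1}l(s)\omega(u(s))\,ds
\leq\left(\int_{0}^{t}(t-s)^{q(\beta-1)}\,ds\right)^{\frac{1}{q}}
\left(\int_{0}^{t}l^{p}(s)\omega^{p}(u(s))\,ds\right)^{\frac{1}{p}}.
\]
The hypothesis $p>\frac{1}{\beta}$ is precisely what makes the first factor finite: it gives $q(\beta-1)+1=\frac{p\beta-1}{p-1}>0$, so that $\int_{0}^{t}(t-s)^{q(\beta-1)}\,ds=\frac{t^{q(\beta-1)+1}}{q(\beta-1)+1}$ converges. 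Raising this to the power $\frac{1}{q}$ and using $\frac{q(\beta-1)+1}{q}=\beta-1+\frac{1}{q}$, then multiplying through by $b(t)$, produces exactly the coefficient $c(t)$, so that~(2.15) collapses to
\[
u(t)\leq a(t)+c(t)\left(\int_{0}^{t}l^{p}(s)\omega^{p}(u(s))\,ds\right)^{\frac{1}{p}}.
\]

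Next I would recognize this as an instance of~(2.1), with $b$ replaced by $c$, the kernel $l$ replaced by $l^{p}$, and the function $\omega$ replaced by $\omega^{p}$. I would check that the substituted data meet the hypotheses of Theorem~2.1: $c\in C[0,T)$ is nonnegative; since $l\in C(0,T)\cap L^{p}_{Loc}[0,T)$ we have $l^{p}\in C(0,T)\cap L^{1}_{Loc}[0,T)$; and $\omega^{p}$ is again continuous, nonnegative and nondecreasing on $[0,+\infty)$. Applying Theorem~2.1 with these replacements then yields~(2.16), the auxiliary function becoming $\mu(t)=\omega^{p}(2^{1-\frac{1}{p}}t^{\frac{1}{p}})$ and the quantity inside $\Omega^{-1}$ becoming $\Omega(a^{p}(t))+c^{p}(t)\int_{0}^{t}l^{p}(s)\,ds$, which matches the claimed bound verbatim.

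The only genuine obstacle is the convergence of the kernel integral, and I expect essentially all of the work to lie in that single H\"older step: verifying that $p>\frac{1}{\beta}$ forces $q(\beta-1)+1>0$, and bookkeeping the resulting constant so that it consolidates cleanly into $c(t)$. Everything after the reduction is a direct invocation of Theorem~2.1, with no further estimation required.
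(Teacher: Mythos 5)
Your proposal is correct and follows essentially the same route as the paper: H\"older's inequality with exponents $q$ and $p$ applied to $(t-s)^{\beta-1}$ versus $l(s)\omega(u(s))$, using $q(\beta-1)+1>0$ to evaluate the kernel integral and consolidate the constant into $c(t)$, followed by a direct application of Theorem~2.1 with $\omega$ replaced by $\omega^{p}$ and $l$ by $l^{p}$. The only detail worth adding is that $c(t)$ is nondecreasing (since $\beta-1+\tfrac{1}{q}=\beta-\tfrac{1}{p}>0$ and $b$ is nondecreasing and nonnegative), which is needed to meet the hypotheses of Theorem~2.1.
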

\begin{proof}From (2.15), since $p>\frac{1}{\beta}$ and $\frac{1}{p}+\frac{1}{q}=1$, then $q(\beta-1)+1>0$, using the
H\"{o}lder inequality, we know
\begin{equation}
\begin{split}
u(t)&\leq a(t)+b(t)(\int_{0}^{t}(t-s)^{q(\beta-1)}ds)^{\frac{1}{q}}(\int_{0}^{t}(l(s)\omega(u(s)))^{p}ds)^{\frac{1}{p}}\\
&\leq a(t)+\frac{t^{\beta-1+\frac{1}{q}}b(t)}{(q(\beta-1)+1)^{\frac{1}{q}}}(\int_{0}^{t}l^{p}(s)\omega^{p}(u(s))ds)^{\frac{1}{p}}.\\
\end{split}
\end{equation}
Then, by Theorem 2.1, we get
\begin{equation}
u(t)\leq 2^{1-\frac{1}{p}}\left(\Omega^{-1}\left(\Omega(a^{p}(t))+c^{p}(t)\int_{0}^{t}l^{p}(s)ds\right)\right)^{\frac{1}{p}},  \qquad
t\in[0,T_{1}],
\end{equation}
where $T_{1}\in(0,T)$ is such that
$\Omega(a^{p}(t))+c^{p}(t)\int_{0}^{t}l^{p}(s)ds\in\mathrm{Dom}(\Omega^{-1})$ for all $t\in
[0,T_{1}]$.
Thus, we complete the proof of Theorem 2.3.
\end{proof}

\begin{thm}
Let $a, b\geq 0$, $\alpha>\delta\geq 0$ and $0<\beta<1$, $p>\frac{1}{\beta}$ and $q=\frac{p}{p-1}$, $f:(0,T)\times\mathbb{R}\rightarrow\mathbb{R}$ be a continuous function and
$$|f(t,x)|\leq l(t)\omega(t^{\alpha}|x|),$$
where nonnegative
function $l(t)\in C(0,T)\bigcap L^{p}_{Loc}[0,T)$ and nondecreasing, nonnegative function $\omega\in C[0,+\infty)$, and let $t^{\alpha}u(t)$ be a
continuous, nonnegative function on $[0,T)$ with
\begin{equation}
u(t)\leq
at^{-\alpha}+bt^{-\delta}\int_{0}^{t}(t-s)^{\beta-1}f(s,u(s))ds,
\qquad t\in(0,T).
\end{equation}
Then
\begin{equation}
u(t)\leq 2^{1-\frac{1}{p}}t^{-\alpha}\left(\Omega^{-1}\left(\Omega(a^{p})+c^{p}(t)\int_{0}^{t}l^{p}(s)ds\right)\right)^{\frac{1}{p}},  \qquad
t\in(0,T_{1}],
\end{equation}
where $c(t)=\frac{bt^{\alpha-\delta+\beta-1+\frac{1}{q}}}{(q(\beta-1)+1)^{\frac{1}{q}}}$, $\Omega(x)=\int_{1}^{x}\frac{1}{\mu(t)}dt$,
$\mu(t)=\omega^{p}(2^{1-\frac{1}{p}}t^{\frac{1}{p}})$,
$\Omega^{-1}$ is the inverse of $\Omega$, and $T_{1}\in(0,T)$ is such that
$\Omega(a^{p})+c^{p}(t)\int_{0}^{t}l^{p}(s)ds\in\mathrm{Dom}(\Omega^{-1})$ for all $t\in
[0,T_{1}]$.
\end{thm}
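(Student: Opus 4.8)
The plan is to reduce Theorem~2.4 to Theorem~2.3 by absorbing the singular weight $t^{-\alpha}$ into a new unknown. First I would set $v(t)=t^{\alpha}u(t)$, which is continuous and nonnegative on $[0,T)$ by hypothesis; in particular $u(t)\geq 0$ for $t\in(0,T)$, so that $s^{\alpha}|u(s)|=s^{\alpha}u(s)=v(s)$. Multiplying the assumed inequality (2.19) through by $t^{\alpha}$ gives
\begin{equation}
v(t)=t^{\alpha}u(t)\leq a+bt^{\alpha-\delta}\int_{0}^{t}(t-s)^{\beta-1}f(s,u(s))\,ds.
\end{equation}

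Next I would insert the growth bound $|f(s,u(s))|\leq l(s)\omega(s^{\alpha}|u(s)|)=l(s)\omega(v(s))$, using monotonicity of $\omega$ together with the identity $s^{\alpha}|u(s)|=v(s)$ just noted. Since $f(s,u(s))\leq |f(s,u(s))|$, this yields
\begin{equation}
v(t)\leq a+bt^{\alpha-\delta}\int_{0}^{t}(t-s)^{\beta-1}l(s)\omega(v(s))\,ds,\qquad t\in(0,T),
\end{equation}
which is precisely an inequality of the form (2.15) for the function $v$, with constant majorant $a(t)\equiv a$ and coefficient $b(t)=bt^{\alpha-\delta}$. By continuity one also checks $v(0)\leq a$, so (2.22) in fact holds on all of $[0,T)$.

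The one point requiring genuine care is verifying that the pair $(a(t),b(t))=(a,\,bt^{\alpha-\delta})$ meets the hypotheses of Theorem~2.3, namely that both are nonnegative, continuous, nondecreasing functions on $[0,T)$. Nonnegativity is immediate. Because the assumption $\alpha>\delta$ forces $\alpha-\delta>0$, the power $t^{\alpha-\delta}$ is nondecreasing and extends continuously to $t=0$ with value $0$; hence $b(t)\in C[0,T)$ is admissible. This is exactly where the strict inequality $\alpha>\delta$ in the statement is used, and it is the detail most easily overlooked.

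Finally, applying Theorem~2.3 to $v$ produces
\begin{equation}
v(t)\leq 2^{1-\frac{1}{p}}\left(\Omega^{-1}\left(\Omega(a^{p})+\tilde c^{\,p}(t)\int_{0}^{t}l^{p}(s)\,ds\right)\right)^{\frac{1}{p}},
\end{equation}
where the coefficient supplied by Theorem~2.3 is $\tilde c(t)=\dfrac{t^{\beta-1+\frac{1}{q}}b(t)}{(q(\beta-1)+1)^{1/q}}=\dfrac{bt^{\alpha-\delta+\beta-1+\frac{1}{q}}}{(q(\beta-1)+1)^{1/q}}=c(t)$, matching the stated $c(t)$, and $\Omega(a^{p}(t))=\Omega(a^{p})$ since $a(t)$ is constant. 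Substituting back $u(t)=t^{-\alpha}v(t)$ then gives exactly (2.20) on $(0,T_{1}]$, the restriction to $t>0$ coming from the factor $t^{-\alpha}$. No genuinely hard step is anticipated: the whole argument is the change of variable $v=t^{\alpha}u$ followed by a direct invocation of Theorem~2.3, the only subtlety being the admissibility of $bt^{\alpha-\delta}$ at the origin.
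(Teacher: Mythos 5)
Your proposal is correct and follows essentially the same route as the paper: both substitute $v(t)=t^{\alpha}u(t)$, bound $f$ by $l(s)\omega(v(s))$, and then reduce to the earlier integral inequality results. The only cosmetic difference is that you invoke Theorem~2.3 directly (with $a(t)\equiv a$, $b(t)=bt^{\alpha-\delta}$), whereas the paper re-runs the H\"older step and cites Theorem~2.1; your explicit check that $bt^{\alpha-\delta}$ is continuous and nondecreasing at $0$ (using $\alpha>\delta$) is a point the paper leaves implicit.
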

\begin{proof}
Let $v(t)=t^{\alpha}u(t)$, then we get
\begin{equation}
\begin{split}
v(t)&\leq
a+bt^{\alpha-\delta}\int_{0}^{t}(t-s)^{\beta-1}f(s,s^{-\alpha}v(s))ds\\
&\leq
a+bt^{\alpha-\delta}\int_{0}^{t}(t-s)^{\beta-1}l(s)\omega(v(s))ds\\
&\leq
a+\frac{bt^{\alpha-\delta+\beta-1+\frac{1}{q}}}{(q(\beta-1)+1)^{\frac{1}{q}}}(\int_{0}^{t}l^{p}(s)\omega^{p}(u(s))ds)^{\frac{1}{p}}.\\
\end{split}
\end{equation}
Using Theorem 2.1, we obtain the inequality (2.20) and complete the proof.
\end{proof}

In Theorem 2.4, if $\alpha=1-\beta$ and $\delta=0$, then we obtain the following conclusion under the weaker condition $t^{1-\beta}l(t)\in C(0,T)\bigcap L^{p}_{Loc}[0,T)$.
\begin{thm}
Let $a\geq 0$ and $0<\beta<1$, $p>\frac{1}{\beta}$ and $q=\frac{p}{p-1}$, let $b(t)$ be a nondecreasing, nonnegative continuous function on $[0,T)$, $f:(0,T)\times\mathbb{R}\rightarrow\mathbb{R}$ be a continuous function and
$$|f(t,x)|\leq l(t)\omega(t^{1-\beta}|x|),$$
where nonnegative
function $t^{1-\beta}l(t)\in C(0,T)\bigcap L^{p}_{Loc}[0,T)$ and nondecreasing, nonnegative function $\omega\in C[0,+\infty)$, and let $t^{1-\beta}u(t)$ be a
continuous, nonnegative function on $[0,T)$ with
\begin{equation}
u(t)\leq
at^{\beta-1}+b(t)\int_{0}^{t}(t-s)^{\beta-1}f(s,u(s))ds,
\qquad t\in(0,T).
\end{equation}
Then
\begin{equation}
u(t)\leq
2^{1-\frac{1}{p}}t^{\beta-1}\left(\Omega^{-1}\left(\Omega(a^{p})+c^{p}(t)\int_{0}^{t}s^{p(1-\beta)}l^{p}(s)ds\right)\right)^{\frac{1}{p}}, \qquad t\in(0,T_{1}],
\end{equation}
where $c(t)=\frac{2^{\frac{1}{q}}b(t)t^{\beta-1+\frac{1}{q}}}{(q\beta-q+1)^{\frac{1}{q}}}$, $\Omega(x)=\int_{1}^{x}\frac{1}{\mu(t)}dt$,
$\mu(t)=\omega^{p}(2^{1-\frac{1}{p}}t^{\frac{1}{p}})$,
$\Omega^{-1}$ is the inverse of $\Omega$, and $T_{1}\in (0,T)$ is
such that $\Omega(a^{p})+c^{p}(t)\int_{0}^{t}s^{p(1-\beta)}l^{p}(s)ds\in
\mathrm{Dom}(\Omega^{-1})$ for all $t\in [0,T_{1}]$.
\end{thm}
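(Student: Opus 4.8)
The plan is to reduce the inequality (2.23) to the hypotheses of Theorem 2.1 in the same spirit as the proof of Theorem 2.4, but to absorb the singular weight $s^{\beta-1}$ into the integral kernel rather than into $l$; this is precisely what allows the weaker assumption $t^{1-\beta}l(t)\in C(0,T)\cap L^{p}_{Loc}[0,T)$.

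First I would set $v(t)=t^{1-\beta}u(t)$, which is continuous and nonnegative on $[0,T)$ by hypothesis, so that $u(s)=s^{\beta-1}v(s)$ and in particular $u\ge 0$. Multiplying (2.23) by $t^{1-\beta}$ and invoking the growth bound $|f(s,u(s))|\le l(s)\omega(s^{1-\beta}|u(s)|)=l(s)\omega(v(s))$, I obtain $v(t)\le a+t^{1-\beta}b(t)\int_{0}^{t}(t-s)^{\beta-1}l(s)\omega(v(s))\,ds$.

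The key step, and the point where the argument departs from Theorem 2.4, is to write $l(s)=s^{\beta-1}\bigl(s^{1-\beta}l(s)\bigr)$, thereby moving the factor $s^{\beta-1}$ into the kernel while leaving the locally $L^{p}$ quantity $s^{1-\beta}l(s)$ behind. The H\"older inequality with exponents $q$ and $p$ then gives
\[
\int_{0}^{t}(t-s)^{\beta-1}s^{\beta-1}\,s^{1-\beta}l(s)\omega(v(s))\,ds\le\Bigl(\int_{0}^{t}(t-s)^{q(\beta-1)}s^{q(\beta-1)}\,ds\Bigr)^{\frac1q}\Bigl(\int_{0}^{t}s^{p(1-\beta)}l^{p}(s)\omega^{p}(v(s))\,ds\Bigr)^{\frac1p}.
\]
The main obstacle is the estimate of the doubly singular integral $\int_{0}^{t}(t-s)^{q(\beta-1)}s^{q(\beta-1)}\,ds$: unlike in Theorem 2.4 the kernel is now singular both at $s=0$ and at $s=t$. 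Convergence itself is easy, since $p>\frac1\beta$ forces $q(\beta-1)+1=\frac{p\beta-1}{p-1}>0$; the substitution $s=t\sigma$ evaluates the integral as $t^{2q(\beta-1)+1}B\bigl(q(\beta-1)+1,\,q(\beta-1)+1\bigr)$. The delicate part is extracting the sharp constant: one needs the Beta-value bound $B(x,x)\le \frac2x$ on $(0,1)$ (equivalently, one splits $[0,t]$ at $t/2$ and exploits the monotonicity of each singular factor on its half) to reach a bound of the form $\frac{2\,t^{2q(\beta-1)+1}}{q(\beta-1)+1}$. Multiplying by $t^{1-\beta}b(t)$ and raising the $L^{q}$-factor to the power $\frac1q$ collects exactly into $c(t)=\frac{2^{1/q}b(t)t^{\beta-1+1/q}}{(q\beta-q+1)^{1/q}}$, so that $v(t)\le a+c(t)\bigl(\int_{0}^{t}s^{p(1-\beta)}l^{p}(s)\omega^{p}(v(s))\,ds\bigr)^{\frac1p}$.

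Finally I would apply Theorem 2.1 with the constant $a$ playing the role of $a(t)$, with $c(t)$ playing the role of $b(t)$, with $s^{p(1-\beta)}l^{p}(s)$ playing the role of $l(s)$, and with $\omega^{p}$ playing the role of $\omega$, so that the auxiliary function is $\mu(t)=\omega^{p}(2^{1-1/p}t^{1/p})$, matching the statement. Before doing so I must verify the hypotheses of Theorem 2.1 for these data: the exponent $\beta-1+\frac1q=\beta-\frac1p>0$ (again because $p>\frac1\beta$), so $t^{\beta-1+1/q}$ is nondecreasing and hence $c(t)$ is a nonnegative, nondecreasing continuous function, while $s^{1-\beta}l(s)\in L^{p}_{Loc}[0,T)$ forces $s^{p(1-\beta)}l^{p}(s)\in L^{1}_{Loc}[0,T)$. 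Theorem 2.1 then furnishes the bound on $v(t)$, and substituting back $u(t)=t^{\beta-1}v(t)$ yields (2.24) on $(0,T_{1}]$.
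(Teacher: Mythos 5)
Your argument is essentially the paper's proof: the same substitution $v(t)=t^{1-\beta}u(t)$, the same growth bound on $f$, and the same H\"older split that places $s^{1-\beta}l(s)\omega(v(s))$ in the $L^{p}$ factor --- note that your kernel $t^{1-\beta}(t-s)^{\beta-1}s^{\beta-1}$ is identically $\bigl(\tfrac{1}{t-s}+\tfrac{1}{s}\bigr)^{1-\beta}$, which is the form the paper works with, so the two decompositions coincide. The only point of divergence is how the singular $L^{q}$ integral is bounded: you evaluate $\int_{0}^{t}\bigl[(t-s)s\bigr]^{q(\beta-1)}ds$ exactly as $t^{2q(\beta-1)+1}B(x,x)$ with $x=q(\beta-1)+1$ and invoke $B(x,x)\le 2/x$, whereas the paper uses the pointwise subadditivity $\bigl(\tfrac{1}{t-s}+\tfrac{1}{s}\bigr)^{q(1-\beta)}\le (t-s)^{q(\beta-1)}+s^{q(\beta-1)}$ and integrates; both yield the same constant and the same $c(t)$. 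One small caution: your parenthetical justification of $B(x,x)\le 2/x$ by splitting at the midpoint only gives $2^{2-2x}/x$, which exceeds $2/x$ when $x<\tfrac12$ (and $x$ can be that small here); the clean proof of $B(x,x)\le 2/x$ on $(0,1)$ is exactly the subadditivity inequality $(\sigma(1-\sigma))^{x-1}\le \sigma^{x-1}+(1-\sigma)^{x-1}$, i.e.\ the paper's step. With that replacement your proof is complete, and your verification of the hypotheses of Theorem 2.1 (monotonicity of $c(t)$ from $\beta-\tfrac1p>0$, local integrability of $s^{p(1-\beta)}l^{p}(s)$) is correct and in fact more explicit than the paper's.
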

\begin{proof}
Let $v(t)=t^{1-\beta}u(t)$, from (2.22) and $0<q(1-\beta)<1$, using the
H\"{o}lder inequality, we know
\begin{equation}
\begin{split}
v(t)&\leq
a+t^{1-\beta}b(t)\int_{0}^{t}(t-s)^{\beta-1}f(s,s^{\beta-1}v(s))ds\\
&\leq a+t^{1-\beta}b(t)\int_{0}^{t}(t-s)^{\beta-1}l(s)\omega(v(s))ds\\
&= a+b(t)\int_{0}^{t}(\frac{1}{t-s}+\frac{1}{s})^{1-\beta}s^{1-\beta}l(s)\omega(v(s))ds\\
&\leq a+b(t)(\int_{0}^{t}(\frac{1}{t-s}+\frac{1}{s})^{q(1-\beta)}ds)^{\frac{1}{q}}(\int_{0}^{t}s^{p(1-\beta)}l^{p}(s)\omega^{p}(v(s))ds)^{\frac{1}{p}}\\
&\leq a+b(t)(\int_{0}^{t}(t-s)^{q(\beta-1)}+s^{q(\beta-1)}ds)^{\frac{1}{q}}(\int_{0}^{t}s^{p(1-\beta)}l^{p}(s)\omega^{p}(v(s))ds)^{\frac{1}{p}}\\
&= a+\frac{2^{\frac{1}{q}}b(t)t^{\beta-1+\frac{1}{q}}}{(q\beta-q+1)^{\frac{1}{q}}}(\int_{0}^{t}s^{p(1-\beta)}l^{p}(s)\omega^{p}(v(s))ds)^{\frac{1}{p}}.\\
\end{split}
\end{equation}
By Theorem 2.1, then we have
\begin{equation}
v(t)\leq
2^{1-\frac{1}{p}}\left(\Omega^{-1}\left(\Omega(a^{p})+c^{p}(t)\int_{0}^{t}s^{p(1-\beta)}l^{p}(s)ds\right)\right)^{\frac{1}{p}}, \qquad t\in[0,T_{1}].
\end{equation}
Thus, we complete the proof.
\end{proof}

As a consequence of Theorem 2.5, we can get the following conclusion.
\begin{cor}
Let $a\geq 0$, $0<\gamma\leq1$ and $0<\beta<1$, $p>\frac{1}{\beta}$ and $q=\frac{p}{p-1}$, let $b(t)$ be a nondecreasing, nonnegative continuous function on $[0,+\infty)$, let $l(t)$ be a nonnegative
function with $t^{(1-\gamma)(1-\beta)}l(t)\in C(0,+\infty)\bigcap L^{p}_{Loc}[0,+\infty)$, and let $t^{1-\beta}u(t)$ be a
continuous, nonnegative function on $[0,+\infty)$ with
\begin{equation}
u(t)\leq
at^{\beta-1}+b(t)\int_{0}^{t}(t-s)^{\beta-1}l(s)u^{\gamma}(s)ds,
\qquad t\in(0,+\infty).
\end{equation}
Then the following assertions holds:\\
If $\gamma=1$, we can obtain
\begin{equation}
u(t)\leq
2^{1-\frac{1}{p}}at^{\beta-1}\exp\left(\frac{2^{p-1}c^{p}(t)}{p}\int_{0}^{t}l^{p}(s)ds\right), \qquad t\in(0,+\infty),
\end{equation}
where $c(t)=\frac{2^{\frac{1}{q}}b(t)t^{\beta-1+\frac{1}{q}}}{(q\beta-q+1)^{\frac{1}{q}}}$.\\
If $0<\gamma<1$, then we can obtain
\begin{equation}
u(t)\leq
2^{1-\frac{1}{p}}t^{\beta-1}\left(a^{p(1-\gamma)}+(1-\gamma)2^{(p-1)\gamma}c^{p}(t)\int_{0}^{t}s^{p(1-\gamma)(1-\beta)}l^{p}(s)ds\right)^{\frac{1}{p(1-\gamma)}}, \qquad t\in(0,+\infty),
\end{equation}
where $c(t)$ is defined as in (2.27).
\end{cor}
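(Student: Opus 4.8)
The plan is to obtain this corollary as a direct specialization of Theorem 2.5, choosing $\omega$ to be a pure power and absorbing the remaining powers of $s$ into the coefficient. For nonnegative $u$ one may write $u^{\gamma}(s)=s^{-\gamma(1-\beta)}\bigl(s^{1-\beta}u(s)\bigr)^{\gamma}$, so with the choices $\omega(x)=x^{\gamma}$ and $\tilde{l}(s)=s^{-\gamma(1-\beta)}l(s)$ the integrand becomes $l(s)u^{\gamma}(s)=\tilde{l}(s)\,\omega\bigl(s^{1-\beta}u(s)\bigr)$. Thus (2.26) is exactly the hypothesis (2.22) of Theorem 2.5 for $f(s,u)=\tilde{l}(s)\omega(s^{1-\beta}u)$ with kernel $\tilde{l}$ in place of $l$. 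Before applying the theorem I would verify its hypotheses: $\omega(x)=x^{\gamma}$ is continuous, nonnegative and nondecreasing on $[0,+\infty)$, while $s^{1-\beta}\tilde{l}(s)=s^{(1-\gamma)(1-\beta)}l(s)$ is precisely the function assumed to belong to $C(0,+\infty)\cap L^{p}_{Loc}[0,+\infty)$. Hence Theorem 2.5 applies with no change.

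The second step is to make the functions in conclusion (2.23) explicit. With $\omega(x)=x^{\gamma}$ one gets $\mu(t)=\omega^{p}\bigl(2^{1-\frac{1}{p}}t^{\frac{1}{p}}\bigr)=2^{(p-1)\gamma}t^{\gamma}$, and the integral in (2.23) simplifies to $\int_{0}^{t}s^{p(1-\beta)}\tilde{l}^{p}(s)\,ds=\int_{0}^{t}s^{p(1-\gamma)(1-\beta)}l^{p}(s)\,ds$, finite by the local integrability assumption. It then remains to evaluate $\Omega(x)=\int_{1}^{x}\mu(\tau)^{-1}\,d\tau$ and invert it in the two regimes. For $\gamma=1$ the integrand is $2^{-(p-1)}\tau^{-1}$, so $\Omega(x)=2^{-(p-1)}\ln x$ and $\Omega^{-1}(y)=\exp\bigl(2^{p-1}y\bigr)$; for $0<\gamma<1$ the integrand is $2^{-(p-1)\gamma}\tau^{-\gamma}$, giving $\Omega(x)=\frac{x^{1-\gamma}-1}{(1-\gamma)2^{(p-1)\gamma}}$ and $\Omega^{-1}(y)=\bigl(1+(1-\gamma)2^{(p-1)\gamma}y\bigr)^{1/(1-\gamma)}$.

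The final step is to substitute these expressions into (2.23) and simplify. For $\gamma=1$ the additive constant $\Omega(a^{p})=2^{-(p-1)}\ln a^{p}$ passes through the exponential and recombines with the prefactor $2^{1-\frac{1}{p}}t^{\beta-1}$ to produce (2.27); for $0<\gamma<1$ the constant $\Omega(a^{p})$ cancels the $-1$ inside $\Omega^{-1}$, leaving the bracket $a^{p(1-\gamma)}+(1-\gamma)2^{(p-1)\gamma}c^{p}(t)\int_{0}^{t}s^{p(1-\gamma)(1-\beta)}l^{p}(s)\,ds$ raised to the power $1/(p(1-\gamma))$, which is (2.28). The only point needing attention beyond this algebra is that the argument of $\Omega^{-1}$ be admissible for every $t$, so that $T_{1}$ may be taken as $+\infty$: since $\omega$ is a power, $\Omega$ is surjective onto $\mathbb{R}$ when $\gamma=1$ and onto $\bigl(-[(1-\gamma)2^{(p-1)\gamma}]^{-1},+\infty\bigr)$ when $0<\gamma<1$, and in the latter case $\Omega(a^{p})\geq -[(1-\gamma)2^{(p-1)\gamma}]^{-1}$ because $a^{p(1-\gamma)}\geq 0$, so the argument stays in $\mathrm{Dom}(\Omega^{-1})$ and the estimates hold on all of $(0,+\infty)$. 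I expect no real obstacle here; the entire proof is the bookkeeping of these two elementary integrations together with the two substitutions.
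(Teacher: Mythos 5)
Your proposal is correct and is exactly the derivation the paper intends: the paper gives no explicit proof of Corollary 2.6 beyond the phrase ``as a consequence of Theorem 2.5,'' and your choice $\omega(x)=x^{\gamma}$, $\tilde{l}(s)=s^{-\gamma(1-\beta)}l(s)$ together with the explicit computation of $\Omega$ and $\Omega^{-1}$ in the two regimes is precisely the bookkeeping being suppressed. Your additional check that the argument of $\Omega^{-1}$ stays in its domain for all $t$ (so $T_{1}=+\infty$) is a detail the paper omits but needs, and you handle it correctly.
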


\begin{rem}
Willett [16] and Zhu [20] studied the inequality (2.1) for the special case $\omega(t)=t^{p}$. Medve\v{d} [8,Theorem 1], Medve\v{d} [9,Theorem 2] and Zhu [19, Theorem 2.7] studied the nonlinear integral inequality (2.15) under the assumptions that $a(t)\in C^{1}[0,T)$  is a nondecreasing, nonnegative function and $b(t)$ is a constant. In Theorem 2.3, we only suppose that $a(t), b(t)$ are nondecreasing, nonnegative continuous functions on $[0,T)$. Hence, our results improve and generalize Theorem 1 in [8],  Theorem 2 in [9] and Theorem 2.7 in [19]. By a reduction to the classical Gronwall inequality, Webb [15] studied the inequality (2.19) when $f(t,u(t))=t^{-\gamma}u(t)$. In [20], Zhu proved the inequality (2.19) when $f(t,u(t))=l(t)u(t)$. In Theorem 2.4, we study the inequality (2.19) when $|f(t,x)|\leq l(t)\omega(t^{\alpha}|x|)$, where $l(t)\in C(0,T)\bigcap L^{p}_{Loc}[0,T)$. For the special $\alpha=1-\beta$ and $\delta=0$, we study the inequality (2.22) under the weaker condition $t^{1-\beta}l(t)\in C(0,T)\bigcap L^{p}_{Loc}[0,T)$. Theorem 2.4, Theorem 2.5 and Corollary 2.6 generalize Theorem 3.9 in [15] and Theorem 3.4 in [20].
\end{rem}

\begin{exa}
Suppose that $t^{\frac{1}{2}}u(t)$ is a
continuous, nonnegative function and satisfies the following inequality
\begin{equation}
u(t)\leq
t^{\frac{-1}{2}}+t^{\frac{-1}{3}}\int_{0}^{t}(t-s)^{-\frac{1}{3}}s^{\frac{-1}{12}}u^{\frac{1}{2}}(s)ds.
\end{equation}
\end{exa}
Let $p=2$. Using Theorem 2.4, we know $l(t)=t^{\frac{-1}{3}}$, $\mu(t)=2^{\frac{1}{2}}t^{\frac{1}{2}}$ and $\Omega(x)=2^{\frac{1}{2}}(x^{\frac{1}{2}}-1)$, then we obtain
\begin{equation}
u(t)\leq2^{\frac{1}{2}}t^{\frac{-1}{2}}+9t^{\frac{1}{2}},  \qquad
t\in(0,+\infty).
\end{equation}

\begin{exa}
Suppose that $t^{\frac{1}{3}}u(t)$ is a
continuous, nonnegative function and satisfies the inequality
\begin{equation}
u(t)\leq
t^{-\frac{1}{3}}+\int_{0}^{t}(t-s)^{-\frac{1}{3}}s^{\frac{-1}{2}}u^{\frac{1}{2}}(s)ds.
\end{equation}
\end{exa}
Let $p=2$. By Corollary 2.6, we get
\begin{equation}
u(t)\leq 2^{\frac{1}{2}}t^{\frac{-1}{3}}+18t^{\frac{1}{3}},  \qquad
t\in(0,+\infty).
\end{equation}
\begin{rem}
From (2.31), we know $t^{\frac{-1}{2}}u^{\frac{1}{2}}(t)=t^{\frac{-2}{3}}(t^{\frac{1}{3}}u(t))^{\frac{1}{2}}$ and $t^{\frac{-2}{3}}\notin L^{p}_{Loc}[0,T)$$(p>\frac{3}{2})$, hence we can not obtain the estimate of the inequality (2.31) by Theorem 2.4. From Example 2.8 and Example 2.9,
we know that Theorem 2.5 is better than Theorem 2.4 for the special $\alpha=1-\beta$ and $\delta=0$.
\end{rem}

\section{global solutions for fractional differential equation}
In this section, we give the existence and uniqueness results for the fractional differential equation (1.8). we first introduce some notations, definitions, and preliminary facts which are used throughout this section. For more details about fractional calculus, we refer the reader to the
texts ([3,7,10,12]).

Let $\alpha\in (0,1)$, we denote $C_{\alpha}(0,T]=\{x(t):
x(t)\in C(0,T]$ and $\lim_{t\rightarrow 0^{+}}t^{\alpha}x(t)$
exists $\}$. Let $\|x\|_{\alpha}=\sup_{0<t\leq T}t^{\alpha}|x(t)|$, then $C_{\alpha}(0,T]$ endowed with the norm $\|\cdot\|_{\alpha}$ is a Banach space. We denote $C_{\alpha}(0,+\infty)=\{x(t):
x(t)\in C(0,+\infty)$ and $\lim_{t\rightarrow 0^{+}}t^{\alpha}x(t)$
exists $\}$.

\begin{defn}\label{defn: 3.1}[3]
Let $\beta\in(0,1)$, The operator $I^{\beta}$, defined on $L^{1}[0,T]$ by
$$I^{\beta}\varphi(t)=\frac{1}{\Gamma(\beta)}\int_{0}^{t}\frac{\varphi(s)}{(t-s)^{1-\beta}}ds, \qquad t\in[0,T], $$
is called the Riemann-Liouville fractional integral operator of
order $\beta$.
\end{defn}

\begin{defn}\label{defn: 3.2}[3]
Let $\beta\in(0,1)$, The operator $D_{r}^{\beta}$, defined by
$$D_{r}^{\beta}\varphi(t)=\frac{d}{dt}I^{1-\beta}\varphi(t)=\frac{1}{\Gamma(1-\beta)}\frac{d}{dt}\int_{0}^{t}\frac{\varphi(s)}{(t-s)^{\beta}}ds,\qquad a.e.t\in[0,T],$$
where $I^{1-\beta}\varphi(t)$ is an absolutely continuous function, is called the Riemann-Liouville fractional differential operator of order $\beta$.
\end{defn}

\begin{thm}\label{thm: 3.3}[13]
Let $S$ be a convex subset of a Banach space $E$ and assume $0\in S$. Let $F: S\rightarrow S$ be a continuous and compact map, and let the set $\{s\in S :x=\lambda Fx$ for some $\lambda\in(0,1)\}$ be bounded. Then $F$ has at least one fixed point in $S$.
\end{thm}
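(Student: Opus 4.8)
The plan is to deduce this statement from the Schauder fixed point theorem for continuous compact self-maps of a closed convex set, by cutting $F$ off at a large radius and then using the hypothesis on
$$M:=\{x\in S: x=\lambda Fx \text{ for some } \lambda\in(0,1)\}$$
to show that the truncation does not alter the fixed point that Schauder produces. First I would exploit the boundedness of $M$: since $\sup_{x\in M}\|x\|<\infty$, I can choose $R>0$ with $\|x\|<R$ for \emph{every} $x\in M$. The strict inequality is exactly what will force the final contradiction.

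Next I would introduce the radial retraction $r:E\to\overline{B}_R$ onto the closed ball of radius $R$, given by $r(x)=x$ for $\|x\|\le R$ and $r(x)=Rx/\|x\|$ for $\|x\|>R$. This map is continuous, and, crucially, it sends $S$ into $S$: when $\|x\|>R$ the point $r(x)=(R/\|x\|)\,x+(1-R/\|x\|)\,0$ is a convex combination of $x\in S$ and $0\in S$, hence lies in $S$ because $S$ is convex and $0\in S$. Therefore $N:=r\circ F$ maps $C:=S\cap\overline{B}_R$ into itself. Since $F$ is continuous and compact, $F(C)$ is relatively compact, and so is $N(C)=r(F(C))$; by Mazur's theorem $K:=\overline{\mathrm{conv}}\,N(C)$ is a compact convex set.

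Then I would apply Schauder's theorem to obtain a fixed point $x^{*}=N(x^{*})=r(F(x^{*}))$ and finish with the Leray--Schauder dichotomy. If $\|F(x^{*})\|\le R$, then $r$ is the identity there and $x^{*}=F(x^{*})$, as desired. Otherwise $\|F(x^{*})\|>R$, and
\begin{equation*}
x^{*}=r(F(x^{*}))=\lambda F(x^{*}),\qquad \lambda=\frac{R}{\|F(x^{*})\|}\in(0,1),
\end{equation*}
so that $x^{*}\in M$, which forces $\|x^{*}\|<R$ by the choice of $R$; but $\|x^{*}\|=\|r(F(x^{*}))\|=R$ since $r(F(x^{*}))$ lies on the sphere of radius $R$, a contradiction. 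Thus the second alternative is impossible and $x^{*}$ is a genuine fixed point of $F$.

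The step I expect to need the most care is the application of Schauder's theorem, because $S$ is assumed only convex, not closed, so $C=S\cap\overline{B}_R$ need not be closed and the theorem does not apply to it verbatim. I would run the argument on the compact convex set $K=\overline{\mathrm{conv}}\,N(C)$ instead: one has $N(C)\subseteq\overline{B}_R$, hence $K\subseteq\overline{B}_R$, and $N(C)\subseteq S$; if $K\subseteq C$ then $N(K)\subseteq N(C)\subseteq K$ and Schauder applies on $K$. Verifying $K\subseteq C$ is precisely where closedness of $S$ enters. In the intended applications $S$ is a closed ball (or the whole space $C_{1-\beta}$), so $\overline{\mathrm{conv}}\,N(C)\subseteq S\cap\overline{B}_R$ and the difficulty disappears; if one insists on keeping $S$ merely convex, the same conclusion follows after replacing $S$ by its closure $\overline{S}$, to which $F$ and the a priori bound on $M$ extend harmlessly.
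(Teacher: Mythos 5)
This statement is not proved in the paper at all: it is Schaefer's fixed point theorem (the Leray--Schauder alternative), quoted verbatim from reference [13] and used as a black box in the proof of Theorem 3.6. So there is no in-paper argument to compare against; what you have written is the standard textbook proof, and it is essentially correct. Your chain of reasoning --- choose $R$ strictly dominating the a priori bound on $M$, compose $F$ with the radial retraction $r$ onto $\overline{B}_R$ (which preserves $S$ because $r(x)$ is a convex combination of $x$ and $0$), pass to the compact convex set $K=\overline{\mathrm{conv}}\,N(C)$ via Mazur's theorem, apply Schauder there, and rule out the alternative $x^{*}=\lambda F(x^{*})$ with $\lambda\in(0,1)$ by the clash between $\lVert x^{*}\rVert=R$ and $\lVert x^{*}\rVert<R$ --- is exactly how this result is established. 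You are also right to single out the one genuine soft spot: as stated, $S$ is only assumed convex, and $K\subseteq S$ (hence $N(K)\subseteq K$) needs $S$ to be closed, or at least to contain the closed convex hulls of its relatively compact subsets. That is an imprecision in the quoted statement rather than in your argument; in the paper's application $S$ is the whole Banach space $C_{1-\beta}(0,T]$, so the issue is vacuous. Your suggested repair of passing to $\overline{S}$ is slightly glib (a continuous compact $F$ on $S$ need not extend continuously to $\overline{S}$), but since the theorem is only invoked with $S=E$, nothing in the paper depends on that.
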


\begin{lem}
Let $0<\mu<1$, then
$$\varphi(t)=\frac{t^{\mu}-1}{(t-1)^{\mu}}$$
is a nondecreasing function on $(1,+\infty)$ and $\lim_{t\rightarrow 1^{+}}\varphi(t)=0$.
\end{lem}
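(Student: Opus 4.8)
The plan is to treat the two assertions separately: the boundary limit as $t\to 1^{+}$ and the monotonicity on $(1,+\infty)$. Both reduce to elementary estimates exploiting the concavity of the map $t\mapsto t^{\mu}$ for $0<\mu<1$ (its second derivative $\mu(\mu-1)t^{\mu-2}$ is negative), so the argument should be short and self-contained. Throughout I note that for $t>1$ both $t^{\mu}-1$ and $(t-1)^{\mu}$ are positive, so $\varphi(t)>0$.

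For the limit I would first record the concavity inequality $t^{\mu}\le 1+\mu(t-1)$ for $t\ge 1$, which holds because $1+\mu(t-1)$ is the tangent line to the concave function $t^{\mu}$ at $t=1$. Hence $0\le t^{\mu}-1\le\mu(t-1)$, and dividing by $(t-1)^{\mu}>0$ gives
$$0\le\varphi(t)\le\mu(t-1)^{1-\mu},\qquad t>1.$$
Since $1-\mu>0$, the right-hand side tends to $0$ as $t\to 1^{+}$, and a squeeze yields $\lim_{t\to 1^{+}}\varphi(t)=0$.

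For monotonicity I would differentiate directly. Writing $\varphi(t)=(t^{\mu}-1)(t-1)^{-\mu}$ and applying the product rule, I expect that after factoring $\mu(t-1)^{\mu-1}$ out of the numerator the bracketed term telescopes: $t^{\mu-1}(t-1)-(t^{\mu}-1)=t^{\mu}-t^{\mu-1}-t^{\mu}+1=1-t^{\mu-1}$. This gives
$$\varphi'(t)=\frac{\mu\,(1-t^{\mu-1})}{(t-1)^{\mu+1}},\qquad t>1.$$
For $t>1$ the denominator is positive, and since the exponent $\mu-1$ is negative we have $t^{\mu-1}<1$, so the numerator is positive; therefore $\varphi'(t)>0$ and $\varphi$ is strictly increasing, in particular nondecreasing, on $(1,+\infty)$.

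The computation is entirely routine, and there is no genuine obstacle; the only care-point is the algebraic cancellation in the numerator of $\varphi'(t)$, which I would verify explicitly to confirm that it collapses to the single term $1-t^{\mu-1}$ whose sign is then immediate.
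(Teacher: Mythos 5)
Your proof is correct, and the monotonicity argument is identical to the paper's: the same quotient-rule computation, the same factorization collapsing the numerator to $\mu(t-1)^{\mu-1}\bigl[1-t^{\mu-1}\bigr]$, and the same sign analysis using $\mu-1<0$. The only difference is in the routine limit step, where the paper applies L'H\^{o}pital's rule to reduce to $\lim_{t\to 1^{+}}(t-1)^{1-\mu}/t^{1-\mu}=0$ while you squeeze $\varphi(t)$ between $0$ and $\mu(t-1)^{1-\mu}$ via the tangent-line (concavity) inequality; both are equally elementary and valid.
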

\begin{proof}
We know
$$\varphi{'}(t)=\frac{\mu(t-1)^{\mu-1}[1-t^{\mu-1}]}{(t-1)^{2\mu}},$$
then $\varphi{'}(t)>0$ for $t\in(1,+\infty)$. Hence $\varphi(t)$ is a nondecreasing function on $(1,+\infty)$. Next, we get
$$\lim_{t\rightarrow 1^{+}}\varphi(t)=\lim_{t\rightarrow 1^{+}}\frac{(t-1)^{1-\mu}}{t^{1-\mu}}=0.$$
Thus, we complete the proof.
\end{proof}

\begin{lem}
Suppose that $s^{1-\beta}\rho(s)\in L^{p}[0,1]$, then
\begin{equation}
\left|\int_{0}^{t}(\frac{t}{t-s})^{1-\beta}\rho(s)ds\right|\leq \frac{2^{\frac{1}{q}}t^{\beta-1+\frac{1}{q}}}{(q\beta-q+1)^{\frac{1}{q}}}\left(\int_{0}^{t}s^{p(1-\beta)}|\rho(s)|^{p}ds\right)^{\frac{1}{p}}
\end{equation}
for $t\in[0,1]$, and if $0<t_{1}\leq t_{2}\leq1$, then
\begin{equation}
\begin{split}
&\left|\int_{0}^{t_{2}}(\frac{t_{2}}{t_{2}-s})^{1-\beta}\rho(s)ds-\int_{0}^{t_{1}}(\frac{t_{1}}{t_{1}-s})^{1-\beta}\rho(s)ds\right|\\
&\leq\frac{2^{\frac{1}{q}}(t_{2}-t_{1})^{\beta-1+\frac{1}{q}}}{(q\beta-q+1)^{\frac{1}{q}}}\left(\int_{t_{1}}^{t_{2}}s^{p(1-\beta)}|\rho(s)|^{p}ds\right)^{\frac{1}{p}}\\
&+\left(\frac{(t_{2}-t_{1})^{1+q(\beta-1)}+t_{1}^{1+q(\beta-1)}-t_{2}^{1+q(\beta-1)}}{q\beta-q+1}\right)^{\frac{1}{q}}\left(\int_{0}^{t_{1}}s^{p(1-\beta)}|\rho(s)|^{p}ds\right)^{\frac{1}{p}}.\\
\end{split}
\end{equation}
where $\beta\in(0,1)$, $p>\frac{1}{\beta}$ and $q=\frac{p}{p-1}$.
\end{lem}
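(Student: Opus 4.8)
The plan is to prove both inequalities by the same two-step strategy: first rewrite the weakly singular kernel so that the singularity at $s=t$ and the singularity at $s=0$ are split apart, and then apply the Hölder inequality with exponents $p$ and $q$. For the first inequality, I would start from the identity
$$
\left(\frac{t}{t-s}\right)^{1-\beta}=\left(\frac{1}{t-s}+\frac{1}{s}\cdot\frac{s}{t}\cdot\frac{t}{1}\right)^{1-\beta},
$$
but more usefully I would imitate the computation already carried out in the proof of Theorem~2.5: write $\frac{t}{t-s}=1+\frac{s}{t-s}$ and compare with $\frac{1}{t-s}+\frac{1}{s}$. The key elementary fact is the inequality $\left(\frac{1}{t-s}+\frac{1}{s}\right)^{q(1-\beta)}\leq (t-s)^{q(\beta-1)}+s^{q(\beta-1)}$, valid because $0<q(1-\beta)<1$ (this is exactly the step used in (2.25)). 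After pulling the factor $s^{1-\beta}$ inside with $\rho(s)$, Hölder gives the product of $\left(\int_0^t\big[(t-s)^{q(\beta-1)}+s^{q(\beta-1)}\big]ds\right)^{1/q}$ and $\left(\int_0^t s^{p(1-\beta)}|\rho(s)|^p\,ds\right)^{1/p}$. Evaluating the first integral using $\int_0^t (t-s)^{q(\beta-1)}ds=\int_0^t s^{q(\beta-1)}ds=\frac{t^{q(\beta-1)+1}}{q(\beta-1)+1}$ (finite since $q(\beta-1)+1=q\beta-q+1>0$) produces the constant $\frac{2^{1/q}t^{\beta-1+1/q}}{(q\beta-q+1)^{1/q}}$, which is exactly (3.1).

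For the second inequality I would decompose the difference of the two integrals over $[0,t_2]$ and $[0,t_1]$ into a piece supported on $[t_1,t_2]$ and a piece supported on $[0,t_1]$. Concretely, I would write
$$
\int_0^{t_2}\Big(\tfrac{t_2}{t_2-s}\Big)^{1-\beta}\rho(s)\,ds-\int_0^{t_1}\Big(\tfrac{t_1}{t_1-s}\Big)^{1-\beta}\rho(s)\,ds
= \int_{t_1}^{t_2}\Big(\tfrac{t_2}{t_2-s}\Big)^{1-\beta}\rho(s)\,ds
+\int_0^{t_1}\Big[\Big(\tfrac{t_2}{t_2-s}\Big)^{1-\beta}-\Big(\tfrac{t_1}{t_1-s}\Big)^{1-\beta}\Big]\rho(s)\,ds.
$$
The first term is bounded exactly as in part one, but with the integral of the kernel taken over $[t_1,t_2]$, giving the first summand on the right of (3.2). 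For the second term I would again apply Hölder, isolating the factor $\left(\int_0^{t_1}\big|(\tfrac{t_2}{t_2-s})^{1-\beta}-(\tfrac{t_1}{t_1-s})^{1-\beta}\big|^{q}\,ds\right)^{1/q}$ against $\left(\int_0^{t_1}s^{p(1-\beta)}|\rho(s)|^p\,ds\right)^{1/p}$. To control the kernel difference I would use the same splitting $\big(\tfrac{t}{t-s}\big)^{1-\beta}\leq \big(\tfrac{t}{t-s}\big)^{1-\beta}$ combined with the subadditivity of $x\mapsto x^{q(\beta-1)}$-type estimates, reducing the $q$-norm of the difference to explicit power functions of $t_1,t_2,(t_2-t_1)$.

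The main obstacle will be the second summand: obtaining the precise closed form $\frac{(t_2-t_1)^{1+q(\beta-1)}+t_1^{1+q(\beta-1)}-t_2^{1+q(\beta-1)}}{q\beta-q+1}$ for the integral of the kernel difference raised to the $q$th power. The sign structure here is delicate, since $t_1^{1+q(\beta-1)}-t_2^{1+q(\beta-1)}$ is negative while $(t_2-t_1)^{1+q(\beta-1)}$ is positive, and one must verify the whole expression stays nonnegative; this is precisely where Lemma~3.4 enters, since the monotonicity of $\varphi(t)=\frac{t^{\mu}-1}{(t-1)^{\mu}}$ (with $\mu=q(1-\beta)$) is what guarantees the required comparison between $(t_2-t_1)^{1-\beta}$-weighted terms at the two endpoints. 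I would therefore first reduce the kernel-difference estimate to an inequality of the form handled by Lemma~3.4 via the substitution $t=t_2/t_1$, apply that lemma to bound the difference pointwise, and only then integrate and invoke Hölder to assemble (3.2). Once both summands are in hand the triangle inequality finishes the proof.
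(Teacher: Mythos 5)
Your overall strategy coincides with the paper's: the same kernel splitting $\frac{t}{t-s}=s\left(\frac{1}{t-s}+\frac{1}{s}\right)$ followed by H\"{o}lder and the subadditivity of $x\mapsto x^{q(1-\beta)}$ for (3.1), and the same decomposition of the difference into a piece on $[t_{1},t_{2}]$ and a kernel-difference piece on $[0,t_{1}]$ for (3.2), with Lemma 3.4 controlling the latter. The first part of your argument is complete and correct.

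However, the second summand of (3.2) --- which you yourself flag as ``the main obstacle'' --- is not actually closed, and two concrete steps are missing. First, the pointwise bound you need comes from applying Lemma 3.4 with $\mu=1-\beta$ (not $\mu=q(1-\beta)$) at $x=t_{1}/s\le y=t_{2}/s$: since $t\mapsto\frac{t}{t-s}$ is decreasing in $t$, the monotonicity of $\varphi$ rearranges to
$$\left|\Bigl(\tfrac{t_{2}}{t_{2}-s}\Bigr)^{1-\beta}-\Bigl(\tfrac{t_{1}}{t_{1}-s}\Bigr)^{1-\beta}\right|\le\Bigl(\tfrac{s}{t_{1}-s}\Bigr)^{1-\beta}-\Bigl(\tfrac{s}{t_{2}-s}\Bigr)^{1-\beta},$$
which is exactly what lets you factor out $s^{1-\beta}$ before H\"{o}lder; your line ``$(\frac{t}{t-s})^{1-\beta}\leq(\frac{t}{t-s})^{1-\beta}$'' is a tautology and shows this bound was not derived. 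Second, after H\"{o}lder you must integrate $\left[(t_{1}-s)^{\beta-1}-(t_{2}-s)^{\beta-1}\right]^{q}$ in closed form; the paper does this via the elementary inequality $(a-b)^{q}\le a^{q}-b^{q}$ for $a\ge b\ge0$ and $q\ge1$, which reduces the integrand to $(t_{1}-s)^{q(\beta-1)}-(t_{2}-s)^{q(\beta-1)}$ and, upon integration over $[0,t_{1}]$, yields precisely the constant $\left(\frac{(t_{2}-t_{1})^{1+q(\beta-1)}+t_{1}^{1+q(\beta-1)}-t_{2}^{1+q(\beta-1)}}{q\beta-q+1}\right)^{1/q}$. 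Without these two steps the stated closed form cannot be reached. A minor further point: for the first summand of (3.2) you also need $t_{2}^{1+q(\beta-1)}-t_{1}^{1+q(\beta-1)}\le(t_{2}-t_{1})^{1+q(\beta-1)}$ (subadditivity again, since $1+q(\beta-1)\in(0,1)$) to obtain the factor $2^{\frac{1}{q}}(t_{2}-t_{1})^{\beta-1+\frac{1}{q}}$; it is not literally ``the same integral as in part one over $[t_{1},t_{2}]$.''
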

\begin{proof}
By the H\"{o}lder inequality and $0<q(1-\beta)<1$, we have
\begin{equation}
\begin{split}
\left|\int_{0}^{t}(\frac{t}{t-s})^{1-\beta}\rho(s)ds\right|&\leq
\int_{0}^{t}(\frac{1}{t-s}+\frac{1}{s})^{1-\beta}s^{1-\beta}|\rho(s)|ds\\
&\leq \left(\int_{0}^{t}(\frac{1}{t-s}+\frac{1}{s})^{q(1-\beta)}ds\right)^{\frac{1}{q}}\left(\int_{0}^{t}s^{p(1-\beta)}|\rho(s)|^{p}ds\right)^{\frac{1}{p}}\\
&\leq \left(\int_{0}^{t}(t-s)^{q(\beta-1)}+s^{q(\beta-1)}ds\right)^{\frac{1}{q}}\left(\int_{0}^{t}s^{p(1-\beta)}|\rho(s)|^{p}ds\right)^{\frac{1}{p}}\\
&=\frac{2^{\frac{1}{q}}t^{\beta-1+\frac{1}{q}}}{(q\beta-q+1)^{\frac{1}{q}}}\left(\int_{0}^{t}s^{p(1-\beta)}|\rho(s)|^{p}ds\right)^{\frac{1}{p}}.\\
\end{split}
\end{equation}
Thus, we obtain the inequality (3.1).

Now, let us prove the inequality (3.2), from (3.2), we get
\begin{eqnarray*}
\lefteqn{\left|\int_{0}^{t_{2}}(\frac{t_{2}}{t_{2}-s})^{1-\beta}\rho(s)ds-\int_{0}^{t_{1}}(\frac{t_{1}}{t_{1}-s})^{1-\beta}\rho(s)ds\right|}\\
&\leq&\left|\int_{t_{1}}^{t_{2}}(\frac{t_{2}}{t_{2}-s})^{1-\beta}\rho(s)ds\right|+\left|\int_{0}^{t_{1}}\left[(\frac{t_{2}}{t_{2}-s})^{1-\beta}-(\frac{t_{1}}{t_{1}-s})^{1-\beta}\right]\rho(s)ds\right|.\\
\end{eqnarray*}
Similarly with the proof of the inequality (3.3), we obtain
\begin{equation}
\left|\int_{t_{1}}^{t_{2}}(\frac{t_{2}}{t_{2}-s})^{1-\beta}\rho(s)ds\right|\leq\frac{2^{\frac{1}{q}}(t_{2}-t_{1})^{\beta-1+\frac{1}{q}}}{(q\beta-q+1)^{\frac{1}{q}}}\left(\int_{t_{1}}^{t_{2}}s^{p(1-\beta)}|\rho(s)|^{p}ds\right)^{\frac{1}{p}}.\\
\end{equation}
From Lemma 3.4, we get
\begin{equation}
(\frac{x}{x-1})^{1-\beta}-(\frac{1}{x-1})^{1-\beta}\leq(\frac{y}{y-1})^{1-\beta}-(\frac{1}{y-1})^{1-\beta}
\end{equation}
for $1<x\leq y<+\infty$. Let $x=\frac{t_{1}}{s}$ and $y=\frac{t_{2}}{s}$ in (3.5), then
\begin{equation}
(\frac{t_{1}}{t_{1}-s})^{1-\beta}-(\frac{s}{t_{1}-s})^{1-\beta}\leq(\frac{t_{2}}{t_{2}-s})^{1-\beta}-(\frac{s}{t_{2}-s})^{1-\beta},
\end{equation}
where $0<s< t_{1}\leq t_{2}\leq1$. Then we know
\begin{equation}
\begin{split}
&\int_{0}^{t_{1}}\left|(\frac{t_{2}}{t_{2}-s})^{1-\beta}-(\frac{t_{1}}{t_{1}-s})^{1-\beta}\right||\rho(s)|ds
\leq\int_{0}^{t_{1}}\left[(\frac{s}{t_{1}-s})^{1-\beta}-(\frac{s}{t_{2}-s})^{1-\beta}\right]|\rho(s)|ds\\
&\leq\left(\int_{0}^{t_{1}}\left[(\frac{1}{t_{1}-s})^{1-\beta}-(\frac{1}{t_{2}-s})^{1-\beta}\right]^{q}ds\right)^{\frac{1}{q}}\left(\int_{0}^{t_{1}}s^{p(1-\beta)}|\rho(s)|^{p}ds\right)^{\frac{1}{p}}\\
&\leq\left(\int_{0}^{t_{1}}(\frac{1}{t_{1}-s})^{q(1-\beta)}-(\frac{1}{t_{2}-s})^{q(1-\beta)}ds\right)^{\frac{1}{q}}\left(\int_{0}^{t_{1}}s^{p(1-\beta)}|\rho(s)|^{p}ds\right)^{\frac{1}{p}}\\
&\leq\left(\frac{(t_{2}-t_{1})^{1+q(\beta-1)}+t_{1}^{1+q(\beta-1)}-t_{2}^{1+q(\beta-1)}}{q\beta-q+1}\right)^{\frac{1}{q}}\left(\int_{0}^{t_{1}}s^{p(1-\beta)}|\rho(s)|^{p}ds\right)^{\frac{1}{p}}.\\
\end{split}
\end{equation}
By the inequality (3.4) and (3.7), then the inequality (3.2) holds for all $0<t_{1}\leq t_{2}\leq1$.
\end{proof}

\begin{thm}
Let $p>\frac{1}{\beta}$ and $q=\frac{p}{p-1}$. Suppose  $f: (0,T]\times\mathbb{R}\rightarrow\mathbb{R}$ is a continuous
function, and there exist nonnegative function $l(t)$ with $t^{1-\beta}l(t)\in
C(0,T]\bigcap L^{p}[0,T]$ and nonnegative
nondecreasing function $\omega\in C[0,+\infty)$ such that
$$|f(t, x)|\leq l(t)\omega(t^{1-\beta}|x|)$$
for all $(t,x)\in (0,T]\times\mathbb{R}$. Then the fractional differential equation
(1.8) has at least one solution in $C_{1-\beta}(0,T]$ provided that
$$\Omega(|x_{0}|^{p})+c^{p}(t)\int_{0}^{t}s^{p(1-\beta)}l^{p}(s)ds\in\mathrm{Dom}(\Omega^{-1})$$
for all $t\in[0,T]$, where $c(t)=\frac{2^{\frac{1}{q}}t^{\beta-1+\frac{1}{q}}}{\Gamma(\beta)(q(\beta-1)+1)^{\frac{1}{q}}}$, $\Omega(x)=\int_{1}^{x}\frac{1}{\mu(t)}dt$,
$\mu(t)=\omega^{p}(2^{1-\frac{1}{p}}t^{\frac{1}{p}})$,
$\Omega^{-1}$ is the inverse of $\Omega$.
\end{thm}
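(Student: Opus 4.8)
The plan is to recast the boundary value problem (1.8) as a fixed point equation and then apply the nonlinear alternative of Theorem 3.3. First I would note that, for $x\in C_{1-\beta}(0,T]$, the condition $\lim_{t\to0^{+}}t^{1-\beta}x(t)=x_{0}$ is equivalent to $(I^{1-\beta}x)(0^{+})=\Gamma(\beta)x_{0}$, since $I^{1-\beta}(s^{\beta-1})(t)\equiv\Gamma(\beta)$. Applying $I^{\beta}$ to $D_{r}^{\beta}x=f$ and using the identity $I^{\beta}D_{r}^{\beta}x(t)=x(t)-\tfrac{(I^{1-\beta}x)(0^{+})}{\Gamma(\beta)}t^{\beta-1}$, problem (1.8) becomes the weakly singular integral equation
\[
x(t)=x_{0}t^{\beta-1}+\frac{1}{\Gamma(\beta)}\int_{0}^{t}(t-s)^{\beta-1}f(s,x(s))\,ds,\qquad t\in(0,T],
\]
whose continuous solutions have the prescribed limit at $0$. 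Thus I define $F:C_{1-\beta}(0,T]\to C_{1-\beta}(0,T]$ by the right-hand side, so that fixed points of $F$ are exactly solutions of (1.8).

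The bulk of the work is verifying the hypotheses of Theorem 3.3 on $S=E=C_{1-\beta}(0,T]$. Writing $v(t)=t^{1-\beta}(Fx)(t)$ and $\rho(s)=f(s,x(s))$, the crucial factorization $t^{1-\beta}(t-s)^{\beta-1}=\big(\tfrac{t}{t-s}\big)^{1-\beta}$ gives
\[
v(t)=x_{0}+\frac{1}{\Gamma(\beta)}\int_{0}^{t}\Big(\frac{t}{t-s}\Big)^{1-\beta}\rho(s)\,ds,
\]
which is precisely the operator analysed in Lemma 3.5. Since $|\rho(s)|\le l(s)\omega(s^{1-\beta}|x(s)|)\le l(s)\omega(\|x\|_{1-\beta})$ and $s^{1-\beta}l(s)\in L^{p}[0,T]$, we have $s^{1-\beta}\rho(s)\in L^{p}[0,T]$, so the bound (3.1) applies. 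That bound shows simultaneously that $Fx\in C_{1-\beta}(0,T]$ (the integral tends to $0$ as $t\to0^{+}$, giving $v(0^{+})=x_{0}$) and that $F$ sends bounded sets to uniformly bounded sets. For equicontinuity I would invoke the difference estimate (3.2), which controls $|v(t_{2})-v(t_{1})|$ by terms that vanish as $t_{2}\to t_{1}$; with uniform boundedness this yields relative compactness of $F(B)$ via Arzel\`a--Ascoli, and continuity of $F$ follows from continuity of $f$ together with the same dominated estimate.

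It then remains to bound the set $\{x:x=\lambda Fx,\ \lambda\in(0,1)\}$. Setting $u=|x|$ and using $|f(s,x(s))|\le l(s)\omega(s^{1-\beta}|x(s)|)$ gives
\[
u(t)\le|x_{0}|t^{\beta-1}+\frac{1}{\Gamma(\beta)}\int_{0}^{t}(t-s)^{\beta-1}l(s)\omega(s^{1-\beta}u(s))\,ds,
\]
which is exactly inequality (2.22) of Theorem 2.5 with $a=|x_{0}|$ and $b(t)\equiv1/\Gamma(\beta)$. Theorem 2.5 then yields
\[
t^{1-\beta}u(t)\le 2^{1-\frac{1}{p}}\Big(\Omega^{-1}\big(\Omega(|x_{0}|^{p})+c^{p}(t)\textstyle\int_{0}^{t}s^{p(1-\beta)}l^{p}(s)\,ds\big)\Big)^{\frac{1}{p}}
\]
with exactly the $c(t)$ named in the statement, and by the standing domain hypothesis the right-hand side is finite and continuous on $[0,T]$. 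Hence $\|x\|_{1-\beta}$ is bounded by a constant independent of $\lambda$, and Theorem 3.3 produces a fixed point of $F$, i.e.\ a solution of (1.8) in $C_{1-\beta}(0,T]$.

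The step I expect to be the main obstacle is establishing compactness of $F$, specifically the equicontinuity of $\{v=t^{1-\beta}Fx\}$ uniformly up to the singularity at $t=0$: the weakly singular kernel combined with the factor $t^{\beta-1}$ must be estimated uniformly, which is exactly what the two parts of Lemma 3.5 are engineered to supply. Once compactness is in place, the a priori bound is an immediate consequence of Theorem 2.5, and the existence conclusion follows from the nonlinear alternative.
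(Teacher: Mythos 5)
Your proposal is correct and follows essentially the same route as the paper: reduction to the Volterra integral equation, compactness and continuity of the integral operator via the two estimates of Lemma 3.5 and dominated convergence, the a priori bound for $\{x:x=\lambda Fx\}$ via Theorem 2.5 with $a=|x_0|$ and $b(t)\equiv 1/\Gamma(\beta)$, and Schaefer's fixed point theorem (Theorem 3.3). The only cosmetic difference is that you justify the equivalence with the integral equation directly from the identity for $I^{\beta}D_{r}^{\beta}$, where the paper cites an external result.
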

\begin{proof}
Step 1. Since $|f(t, x(t))|\leq l(t)\omega(t^{1-\beta}|x(t)|)$, $x(t)\in C_{1-\beta}(0,T]$ and $t^{1-\beta}l(t)\in
C(0,T]\bigcap L^{p}[0,T]$, then we know $f(t, x(t))\in L^{1}[0,T]$. By Theorem 6.2 of [1], we know that $x(t)\in C_{1-\beta}(0,T]$ satisfies the fractional differential equation (1.8) if and only if it satisfies the following Volterra integral equation
\begin{equation}
x(t)=x_{0}t^{\beta-1}+\frac{1}{\Gamma(\beta)}\int_{0}^{t}(t-s)^{\beta-1}f(s,x(s))ds,\qquad
t\in(0, T].
\end{equation}

Step 2. Let $G: C_{1-\beta}(0,T]\rightarrow C_{1-\beta}(0,T]$ be the
operator defined by
\begin{equation}
(Gx)(t)=x_{0}t^{\beta-1}+\frac{1}{\Gamma(\beta)}\int_{0}^{t}(t-s)^{\beta-1}f(s,x(s))ds.
\end{equation}
We will prove that $G$ is a compact operator. To see this, let $\Omega\in C_{1-\beta}(0,T]$ be bounded and $\|x\|_{1-\beta}\leq M$ for each $x\in \Omega$,
we will show that $t^{1-\beta}G(\Omega)$ is uniformly bounded and
equicontinuous on $[0,T]$. Firstly, we prove that $t^{1-\beta}G(\Omega)$ is uniformly bounded. Let $x\in \Omega$, then we have
\begin{equation}
\begin{split}
|t^{1-\beta}Gx(t)|
&\leq|x_{0}|+\frac{t^{1-\beta}}{\Gamma(\beta)}|\int_{0}^{t}(t-s)^{\beta-1}f(s,x(s))ds|\\
&\leq |x_{0}|+\frac{1}{\Gamma(\beta)}(\int_{0}^{t}(\frac{1}{t-s}+\frac{1}{s})^{q(1-\beta)}ds)^{\frac{1}{q}}(\int_{0}^{t}s^{p(1-\beta)}l^{p}(s)\omega^{p}(s^{1-\beta}|x(s)|)ds)^{\frac{1}{p}}\\
&\leq |x_{0}|+\frac{2^{\frac{1}{q}}\omega(M)t^{\beta-1+\frac{1}{q}}}{\Gamma(\beta)(q(\beta-1)+1)^{\frac{1}{q}}}(\int_{0}^{t}s^{p(1-\beta)}l^{p}(s)ds)^{\frac{1}{p}}\\
&\leq |x_{0}|+\frac{2^{\frac{1}{q}}\omega(M)T^{\beta-1+\frac{1}{q}}}{\Gamma(\beta)(q(\beta-1)+1)^{\frac{1}{q}}}(\int_{0}^{T}s^{p(1-\beta)}l^{p}(s)ds)^{\frac{1}{p}}.\\
\end{split}
\end{equation}
This proves that the set that $t^{1-\beta}G(\Omega)$ is uniformly bounded. Secondly, we prove that $t^{1-\beta}G(\Omega)$ is an equicontinuous family. For any $x\in\Omega$, let $0\leq t_{1}<t_{2}\leq T$,
we get
\begin{equation}
\begin{split}
|t_{2}^{1-\beta}Gx(t_{2})-t_{1}^{1-\beta}Gx(t_{1})|
&=\frac{1}{\Gamma(\beta)}|\int_{0}^{t_{2}}(\frac{t_{2}}{t_{2}-s})^{1-\beta}f(s,x(s))ds-\int_{0}^{t_{1}}(\frac{t_{1}}{t_{1}-s})^{1-\beta}f(s,x(s))ds|.\\
\end{split}
\end{equation}
Since $|f(t, x(t))|\leq l(t)\omega(t^{1-\beta}|x(t)|)\leq l(t)\omega(M)$ and $t^{1-\beta}l(t)\in
C(0,T]\bigcap L^{p}[0,T]$. By Lemma 3.5, we know that the right-hand side of the above
inequality (3.11) tends to zero as $t_{2}\rightarrow t_{1}$. Therefore, $t^{1-\beta}G(\Omega)$ is an equicontinuous family.

Step 3. We now show that $G$ is continuous, that is $x_{n}\rightarrow x$ (that is $x_{n}(t)\rightarrow x(t)$ uniformly in $t$) implies $Gx_{n}\rightarrow Gx$. Since $x_{n}\rightarrow x$ in $C_{1-\beta}(0,T]$, then there exists
$r>0$ such that $\|x_{n}\|_{1-\beta}\leq r$ and $\|x\|_{1-\beta}\leq r$.
For every $s\in(0, T]$, we have
$$f(s,x_{n}(s))\rightarrow f(s,x(s)).$$
Using the assumption of $f$, we get
\begin{equation}
\begin{split}
(\frac{t}{t-s})^{1-\beta}|f(s,x_{n}(s))-f(s,x(s))|
&\leq
2\omega(r)(\frac{t}{t-s})^{1-\beta}l(s).\\
\end{split}
\end{equation}
Since $t^{1-\beta}l(t)\in C(0,T]\bigcap L^{p}[0,T]$, using (3.1) in Lemma 3.5, then we know that the function $$s\rightarrow
2\omega(r)(\frac{t}{t-s})^{1-\beta}l(s)$$ is integrable for $s\in
(0,t)$. By means of the Lebesgue dominated convergence theorem yields
$$|\int_{0}^{t}(\frac{t}{t-s})^{1-\beta}[f(s, x_{n}(s))-f(s, x(s))]ds|\rightarrow 0$$
as $n\rightarrow +\infty$. Therefore $t^{1-\beta}Gx_{n}(t)\rightarrow t^{1-\beta}Gx(t)$
pointwise on $[0,T]$ as $n\rightarrow +\infty$. With the fact that $G$ is compact, we get $G: C_{1-\beta}(0,T]\rightarrow C_{1-\beta}(0,T]$ is continuous.

Step 4. We shall prove that the set $\Lambda=\{x \in
C_{1-\beta}(0,T]: x=\lambda Gx$ for some $0<\lambda<1\}$ is
bounded. Indeed, let $x\in\Lambda$, then
\begin{equation}
\begin{split}
|x(t)|&\leq
|x_{0}|t^{\beta-1}+\frac{1}{\Gamma(\beta)}\int_{0}^{t}(t-s)^{\beta-1}|f(s,x(s))|ds\\
&\leq
|x_{0}|t^{\beta-1}+\frac{1}{\Gamma(\beta)}\int_{0}^{t}(t-s)^{\beta-1}l(s)\omega(s^{1-\beta}|x(s)|)ds.\\
\end{split}
\end{equation}
By Theorem 2.5, we obtain
\begin{equation}
x(t)\leq 2^{1-\frac{1}{p}}t^{\beta-1}\left(\Omega^{-1}\left(\Omega(|x_{0}|^{p})+c^{p}(t)\int_{0}^{t}s^{p(1-\beta)}l^{p}(s)ds\right)\right)^{\frac{1}{p}},  \qquad
t\in[0,T],
\end{equation}
and
\begin{equation}
\|x\|_{1-\beta}\leq 2^{1-\frac{1}{p}}\left(\Omega^{-1}\left(\Omega(|x_{0}|^{p})+c^{p}(T)\int_{0}^{T}s^{p(1-\beta)}l^{p}(s)ds\right)\right)^{\frac{1}{p}}.
\end{equation}
Then, the set $\Lambda$ is bounded.

Finally, applying fixed point theorem 3.3, the operator
$G$ has a fixed point $x(t)\in C_{1-\beta}(0,T]$ which is the
solution of the equation (1.8).
\end{proof}

Now, we investigate the existence of global solutions of the fractional differential
equation (1.8).
\begin{thm}
Let $p>\frac{1}{\beta}$ and $q=\frac{p}{p-1}$. Suppose  $f: (0,+\infty)\times\mathbb{R}\rightarrow\mathbb{R}$ is a continuous
function, and there exist nonnegative function $l(t)\in C(0,+\infty)$ with $t^{1-\beta}l(t)\in L^{p}_{Loc}[0,+\infty)$ and nonnegative
nondecreasing function $\omega\in C[0,+\infty)$ with $\lim_{t\rightarrow+\infty}\frac{t}{\omega(t)}=K (0<K\leq+\infty)$ such that
$$|f(t, x)|\leq l(t)\omega(t^{1-\beta}|x|)$$
for all $(t,x)\in (0,+\infty)\times\mathbb{R}$. Then the fractional differential equation
(1.8) has at least one global solution in $C_{1-\beta}(0,+\infty)$.
\end{thm}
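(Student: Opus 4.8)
The plan is to exhaust the half-line by the intervals $(0,n]$, $n=1,2,\dots$, to invoke the finite-interval existence result (Theorem 3.6) on each of them, and then to build a single global solution out of the resulting sequence by a compactness-plus-diagonalization argument. The first thing I would check is that Theorem 3.6 actually applies on every $(0,n]$. Its hypotheses on $f$, $l$ and $\omega$ are inherited verbatim from the present statement (with $T=n$), and $t^{1-\beta}l(t)\in C(0,+\infty)\cap L^{p}_{Loc}[0,+\infty)$ gives $t^{1-\beta}l(t)\in C(0,n]\cap L^{p}[0,n]$. The only non-automatic requirement is the domain condition
$$\Omega(|x_{0}|^{p})+c^{p}(t)\int_{0}^{t}s^{p(1-\beta)}l^{p}(s)\,ds\in\mathrm{Dom}(\Omega^{-1}),\qquad t\in[0,n],$$
and this is exactly where the new hypothesis $\lim_{t\to+\infty}t/\omega(t)=K$ with $0<K\le+\infty$ enters.

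The key computation is that $\Omega(+\infty)=+\infty$. Since $0<K\le+\infty$, the ratio $\omega(t)/t\to 1/K$ stays bounded, so there exist $C>0$ and $t_{0}$ with $\omega(t)\le Ct$ for all $t\ge t_{0}$. Hence, for $t$ large enough that $2^{1-1/p}t^{1/p}\ge t_{0}$,
$$\mu(t)=\omega^{p}\!\left(2^{1-\frac{1}{p}}t^{\frac{1}{p}}\right)\le C^{p}\,2^{p-1}\,t,$$
so that $1/\mu(t)\ge (C^{p}2^{p-1}t)^{-1}$ and therefore $\Omega(+\infty)=\int_{1}^{\infty}\frac{dt}{\mu(t)}=+\infty$. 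Because $\Omega$ is strictly increasing with $\Omega(+\infty)=+\infty$, its range contains $[\Omega(|x_{0}|^{p}),+\infty)$; as $\int_{0}^{t}s^{p(1-\beta)}l^{p}(s)\,ds$ is finite for every finite $t$ (again by $t^{1-\beta}l\in L^{p}_{Loc}$), the displayed quantity lies in $[\Omega(|x_{0}|^{p}),+\infty)\subseteq\mathrm{Dom}(\Omega^{-1})$ for all $t$. Thus Theorem 3.6 yields, for each $n$, a solution $x_{n}\in C_{1-\beta}(0,n]$ of (1.8) on $(0,n]$.

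Next I would set up the compactness argument on each compact block $[0,N]$. Since $x_{n}$ satisfies the Volterra form (3.8), the function $|x_{n}|$ satisfies the integral inequality (2.22) with $a=|x_{0}|$ and $b(t)=1/\Gamma(\beta)$, so Theorem 2.5 gives $|t^{1-\beta}x_{n}(t)|\le 2^{1-1/p}\big(\Omega^{-1}(\Omega(|x_{0}|^{p})+c^{p}(t)\int_{0}^{t}s^{p(1-\beta)}l^{p}(s)\,ds)\big)^{1/p}=:M(t)$ on $(0,n]$, where $M$ is continuous on $[0,N]$ and independent of $n$; thus $\{t^{1-\beta}x_{n}\}_{n\ge N}$ is uniformly bounded on $[0,N]$. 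Equicontinuity of this family on $[0,N]$ follows exactly as in Step 2 of the proof of Theorem 3.6: the uniform bound turns the integrand estimate into $|f(s,x_{n}(s))|\le l(s)\omega(\sup_{[0,N]}M)$, and Lemma 3.5 shows the resulting differences tend to $0$ uniformly in $n$. Arzel\`a--Ascoli on $[0,1]$, then on $[0,2]$, and so on, followed by the usual diagonal extraction, produces a subsequence $x_{n_{k}}$ for which $t^{1-\beta}x_{n_{k}}$ converges uniformly on every $[0,N]$ to a limit $w\in C[0,+\infty)$; I set $x(t)=t^{\beta-1}w(t)$. Since each $t^{1-\beta}x_{n_{k}}$ extends continuously to $0$ with value $x_{0}$, uniform convergence gives $w(0)=x_{0}$, so $x\in C_{1-\beta}(0,+\infty)$ and the initial condition is recovered.

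Finally I would pass to the limit in (3.8). Fix $t>0$; for $s\in(0,t]$ we have $x_{n_{k}}(s)\to x(s)$, hence $f(s,x_{n_{k}}(s))\to f(s,x(s))$ by continuity of $f$, while $|f(s,x_{n_{k}}(s))|\le l(s)\omega(\sup_{[0,t]}M)$ furnishes a dominating function of the form $s\mapsto(t-s)^{\beta-1}l(s)$ that is integrable by the H\"older/Lemma 3.5 estimate. The dominated convergence theorem then lets me send $k\to\infty$ in
$$x_{n_{k}}(t)=x_{0}t^{\beta-1}+\frac{1}{\Gamma(\beta)}\int_{0}^{t}(t-s)^{\beta-1}f(s,x_{n_{k}}(s))\,ds$$
to conclude that $x$ satisfies (3.8) on $(0,+\infty)$, i.e.\ $x$ is a global solution of (1.8). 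I expect the main obstacle to be the verification that the domain condition of Theorem 3.6 persists on every finite interval --- that is, the identity $\Omega(+\infty)=+\infty$ and its reliance on $K>0$ --- since the compactness and limit-passage steps are essentially a localization of the machinery already developed for Theorem 3.6.
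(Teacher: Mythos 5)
Your proposal is correct, and its first half is exactly the paper's argument: the paper verifies the domain condition of Theorem~3.6 on every finite interval by computing $\lim_{t\to+\infty}t/\mu(t)=2^{1-p}K^{p}>0$ and comparing $\int^{+\infty}\frac{dt}{\mu(t)}$ with the divergent integral $\int^{+\infty}\frac{dt}{t}$, which is the same computation you perform via the bound $\omega(t)\le Ct$ for large $t$. Where you genuinely part ways with the paper is the globalization step. The paper's entire argument there is the sentence ``since $T$ can be chosen arbitrarily large, the equation has at least one global solution,'' which is a real gap: Theorem~3.6 provides no uniqueness, so the solutions on $(0,n]$ produced for different $n$ need not be restrictions of one another, and one cannot simply ``let $T\to\infty$.'' Your Arzel\`a--Ascoli plus diagonal extraction closes this gap correctly --- the $n$-independent a priori bound from Theorem~2.5 gives uniform boundedness of $\{t^{1-\beta}x_{n}\}$ on each $[0,N]$, Lemma~3.5 gives equicontinuity uniformly in $n$ (using $\beta-1+\frac{1}{q}=\beta-\frac{1}{p}>0$ and the uniform continuity of $x\mapsto x^{1+q(\beta-1)}$), and dominated convergence passes to the limit in the Volterra equation. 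So your write-up is not merely an alternative route; it supplies a step the paper asserts without proof. The only cosmetic caveat is that your reduction $\omega(t)/t\to 1/K$ should be read as ``eventually bounded'' (covering $K=+\infty$, where the ratio tends to $0$), which you do state; with that, the argument is complete.
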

\begin{proof}
We know
\begin{equation}
\lim_{t\rightarrow+\infty}\frac{t}{\mu(t)}=\lim_{t\rightarrow+\infty}\frac{t}{\omega^{p}(2^{1-\frac{1}{p}}t^{\frac{1}{p}})}=2^{1-p}K^{p}.
\end{equation}
Since $\int_{0}^{+\infty}\frac{1}{t}dt$ is divergent, from (3.16), we get that $\int_{0}^{+\infty}\frac{1}{\mu(t)}dt$ is also divergent. Then we get $[0,+\infty)\in\mathrm{Dom}(\Omega^{-1})$ and $\Omega(|x_{0}|^{p})+c^{p}(t)\int_{0}^{t}s^{p(1-\beta)}l^{p}(s)ds\in\mathrm{Dom}(\Omega^{-1})$
for every $t\in[0,+\infty)$, where $\Omega(x)$ and $c(t)$ are defined as in Theorem 3.6.

For any $T>0$, from Theorem 3.6, We know that the equation (1.8) has at least one
solution in $C_{1-\beta}(0, T]$. Since $T$ can be chosen arbitrarily large, then the equation (1.8)
has at least one global solution in $C_{1-\beta}(0,+\infty)$. Thus, we complete the proof of Theorem 3.7.
\end{proof}

From Theorem 3.7, we can immediately obtain the following conclusion.
\begin{cor}
Let $0<\gamma\leq1$, $p>\frac{1}{\beta}$ and $q=\frac{p}{p-1}$. Suppose  $f: (0,+\infty)\times\mathbb{R}\rightarrow\mathbb{R}$ is a continuous
function, and there exist nonnegative functions $l(t)$ and $k(t)$ such that
$$|f(t, x)|\leq l(t)|x|^{\gamma}+k(t)$$
for all $(t,x)\in (0,+\infty)\times\mathbb{R}$, where $t^{(1-\gamma)(1-\beta)}l(t)\in
C(0,+\infty)\bigcap L^{p}_{Loc}[0,+\infty)$ and $t^{1-\beta}k(t)\in
C(0,+\infty)\bigcap L^{p}_{Loc}[0,+\infty)$. Then the fractional differential equation
(1.8) has at least one solution in $C_{1-\beta}(0,+\infty)$.
\end{cor}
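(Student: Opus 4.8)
The plan is to deduce the corollary directly from Theorem 3.7 by absorbing the power $|x|^{\gamma}$ into an admissible profile $\omega$ evaluated at $t^{1-\beta}|x|$. The key observation is the elementary identity $|x| = t^{\beta-1}\cdot\big(t^{1-\beta}|x|\big)$, so that, writing $v = t^{1-\beta}|x|$, one has $|x|^{\gamma} = t^{(\beta-1)\gamma}v^{\gamma}$. Substituting into the growth hypothesis gives
\[
|f(t,x)| \le l(t)\,t^{-(1-\beta)\gamma}v^{\gamma} + k(t),
\]
and the goal is to dominate the right-hand side by a single product $\tilde l(t)\,\omega(v)$ for which $\omega$ satisfies the limit condition of Theorem 3.7.

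First I would choose $\omega(v) = 1 + v^{\gamma}$. Since $0<\gamma\le 1$, this function is continuous, nonnegative and nondecreasing on $[0,+\infty)$, and a direct computation gives $\lim_{v\to+\infty} v/\omega(v) = +\infty$ when $0<\gamma<1$ and $=1$ when $\gamma=1$; in either case the limit $K$ lies in $(0,+\infty]$, exactly as required. Next I would set $\tilde l(t) = l(t)\,t^{-(1-\beta)\gamma} + k(t)$, which is nonnegative, and observe that
\[
\tilde l(t)\,\omega(v) = \big(l(t)t^{-(1-\beta)\gamma}+k(t)\big)(1+v^{\gamma}) \ge l(t)t^{-(1-\beta)\gamma}v^{\gamma} + k(t) \ge |f(t,x)|,
\]
so that the hypothesis $|f(t,x)|\le \tilde l(t)\,\omega(t^{1-\beta}|x|)$ of Theorem 3.7 holds for all $(t,x)\in(0,+\infty)\times\mathbb{R}$.

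It then remains to verify the regularity and local integrability of $\tilde l$. Multiplying by $t^{1-\beta}$ yields
\[
t^{1-\beta}\tilde l(t) = t^{(1-\beta)(1-\gamma)}l(t) + t^{1-\beta}k(t),
\]
and by hypothesis both summands lie in $C(0,+\infty)\cap L^{p}_{Loc}[0,+\infty)$, hence so does their sum; in particular $t^{1-\beta}\tilde l(t)\in L^{p}_{Loc}[0,+\infty)$. Moreover, since $t^{(1-\beta)(1-\gamma)}$ is continuous and strictly positive on $(0,+\infty)$, continuity of $t^{(1-\beta)(1-\gamma)}l(t)$ forces $l(t)\in C(0,+\infty)$, and likewise $t^{1-\beta}k(t)\in C(0,+\infty)$ gives $k(t)\in C(0,+\infty)$; thus $\tilde l\in C(0,+\infty)$. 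With $\tilde l$ and $\omega$ so constructed, every hypothesis of Theorem 3.7 is met, and that theorem immediately produces a global solution in $C_{1-\beta}(0,+\infty)$.

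The argument is largely bookkeeping, and I expect no serious obstacle; the only genuinely delicate point is the choice of $\omega$, which must simultaneously dominate the power $v^{\gamma}$, stay nondecreasing, and keep $\lim_{v\to+\infty}v/\omega(v)$ bounded away from $0$. The additive normalization $\omega(v)=1+v^{\gamma}$ achieves all three at once and, as a bonus, lets the forcing term $k(t)$ be absorbed cleanly into $\tilde l$ through the constant part of $\omega$.
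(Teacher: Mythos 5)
Your argument is correct and is essentially identical to the paper's own proof: the paper also writes $|f(t,x)|\le\bigl(t^{\gamma(\beta-1)}l(t)+k(t)\bigr)\bigl((t^{1-\beta}|x|)^{\gamma}+1\bigr)$, i.e.\ the same $\tilde l$ and the same $\omega(v)=v^{\gamma}+1$, checks $t^{1-\beta}\tilde l(t)=t^{(1-\gamma)(1-\beta)}l(t)+t^{1-\beta}k(t)\in L^{p}_{Loc}$, and computes the same limits $\lim_{t\to+\infty}t/(t^{\gamma}+1)$ before invoking Theorem 3.7. No differences worth noting.
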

\begin{proof}
Since
\begin{equation}
\begin{split}
|f(t,x)|&\leq t^{\gamma(\beta-1)}l(t)(t^{1-\beta}|x|)^{\gamma}+k(t)\\
&\leq\left(t^{\gamma(\beta-1)}l(t)+k(t)\right)\left((t^{1-\beta}|x|)^{\gamma}+1\right),\\
\end{split}
\end{equation}
then we know
$$t^{1-\beta}\left(t^{\gamma(\beta-1)}l(t)+k(t)\right)=t^{(1-\gamma)(\beta-1)}l(t)+t^{1-\beta}k(t)\in L_{Loc}^{p}[0,+\infty),$$
and if $0<\gamma<1$, then
\begin{equation}
\lim_{t\rightarrow+\infty}\frac{t}{t^{\gamma}+1}=+\infty,
\end{equation}
if $\gamma=1$, then
\begin{equation}
\lim_{t\rightarrow+\infty}\frac{t}{t^{\gamma}+1}=1.
\end{equation}
Applying Theorem 3.7, we know that fractional differential equation
(1.8) has at least one solution in $C_{1-\beta}(0,+\infty)$. Thus the proof is
complete.
\end{proof}

\begin{rem}
In Theorem 4.2 of [14], Trif presented the existence result when
$f(t,x)\leq p(t)|x|+q(t)$, where $p\in C_{\alpha}(0,+\infty)$ and
$q\in C_{1-\beta}(0,+\infty)$ with $0\leq\alpha<\beta$ and
$2\beta-\alpha>1$.

Webb [15, Theorem 4.11] proved the existence results of the equation (1.8) when nonnegative function $f(t,x)=t^{-\gamma}g(t,x)$, where $g(t,x)\leq M(1+x)$, $M>0$ and $0\leq\gamma<\beta$.

In Theorem 4.3 of [20], Zhu proved that the fractional differential equation (1.8) has at least one global solution in $C_{1-\beta}(0,+\infty)$ when $|f(t, x)|\leq l(t)|x|+k(t)$, where $t^{\beta-1}l(t)\in C(0,+\infty)\bigcap L_{Loc}^{p}[0,+\infty)$ and
$k(t)\in C(0,+\infty)\bigcap L_{Loc}^{p}[0,+\infty)$
$(p>\frac{1}{\beta}, \beta\in(0,1))$.

From Corollary 3.8, we can easily obtain that our
results include and generalize Theorem 4.2 in [14], Theorem 4.11 in [15] and Theorem 4.3 in [20]. We notice that our hypothesis in Corollary 3.8 is weaker than that imposed by Zhu [20, Theorem 4.3].
\end{rem}

\begin{thm}
Let $p>\frac{1}{\beta}$ and $q=\frac{p}{p-1}$. If $f: (0,+\infty)\times\mathbb{R}\rightarrow\mathbb{R}$ is a continuous
function and
$$|f(t,x)-f(t,y)|\leq l(t)|x-y|$$ for all $x, y\in \mathbb{R}$ and
$t\in(0,+\infty)$, where $l(t)\in C(0,+\infty)\bigcap
L_{Loc}^{p}[0,+\infty)$ and $t^{1-\beta}|f(t,0)|\in L_{Loc}^{p}[0,+\infty)$. Then the equation (1.8)
has a unique solution on $(0,+\infty)$.
\end{thm}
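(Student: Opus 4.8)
The plan is to establish existence and uniqueness separately, in both cases passing to the equivalent Volterra integral equation (3.8) and then invoking the weakly singular inequalities already proved. Since (3.8) is equivalent to (1.8) for functions in $C_{1-\beta}$ whenever $f(t,x(t))\in L^1_{Loc}$, the whole argument reduces to manipulating the integral equation.

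For existence I would first extract a sublinear growth bound from the Lipschitz hypothesis by the triangle inequality:
\[
|f(t,x)|\leq |f(t,x)-f(t,0)|+|f(t,0)|\leq l(t)|x|+|f(t,0)|,\qquad (t,x)\in(0,+\infty)\times\mathbb{R}.
\]
Setting $k(t):=|f(t,0)|$, I note that $f(t,0)$ is continuous on $(0,+\infty)$ because $f$ is continuous, so together with the assumption $t^{1-\beta}|f(t,0)|\in L^{p}_{Loc}[0,+\infty)$ we get $t^{1-\beta}k(t)\in C(0,+\infty)\cap L^{p}_{Loc}[0,+\infty)$. This is exactly the hypothesis of Corollary 3.8 in the case $\gamma=1$, with $l(t)\in C(0,+\infty)\cap L^{p}_{Loc}[0,+\infty)$ as given. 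Hence Corollary 3.8 already furnishes at least one solution $x\in C_{1-\beta}(0,+\infty)$.

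For uniqueness I would suppose $x$ and $y$ are two solutions in $C_{1-\beta}(0,+\infty)$; each satisfies (3.8). Subtracting the two integral equations and applying the Lipschitz bound, the difference $u(t):=|x(t)-y(t)|$ obeys
\[
u(t)\leq \frac{1}{\Gamma(\beta)}\int_{0}^{t}(t-s)^{\beta-1}l(s)u(s)\,ds,\qquad t\in(0,+\infty).
\]
Here $t^{1-\beta}u(t)=|t^{1-\beta}x(t)-t^{1-\beta}y(t)|$ is continuous and nonnegative on $[0,+\infty)$ because $x,y\in C_{1-\beta}(0,+\infty)$, and since both solutions share the initial datum $\lim_{t\to 0^{+}}t^{1-\beta}x(t)=x_{0}$, this quantity tends to $0$ as $t\to 0^{+}$. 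The displayed inequality is precisely (2.26) of Corollary 2.6 with $a=0$, $\gamma=1$ and the constant weight $b(t)=\frac{1}{\Gamma(\beta)}$, whose hypothesis $l(t)\in C(0,+\infty)\cap L^{p}_{Loc}[0,+\infty)$ holds. The conclusion (2.27) then gives $u(t)\leq 2^{1-\frac{1}{p}}\cdot 0\cdot t^{\beta-1}\exp(\cdots)=0$, so $u\equiv 0$ and $x=y$ on $(0,+\infty)$.

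The growth estimate and the difference inequality are immediate; the step that genuinely requires care is checking that the auxiliary function lands in the function class demanded by the cited inequality. Specifically, in the uniqueness step one must confirm that $t^{1-\beta}u(t)$ is continuous up to $t=0$ (not merely locally bounded), so that Corollary 2.6 applies with $a=0$ and forces collapse to zero; this is exactly where the shared initial condition is used, and it is the point I would treat most carefully. Everything else follows by direct substitution into the earlier results.
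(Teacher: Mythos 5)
Your proposal is correct and follows essentially the same route as the paper: existence via the triangle-inequality bound $|f(t,x)|\leq l(t)|x|+|f(t,0)|$ and Corollary 3.8, and uniqueness by subtracting the Volterra equations and applying Corollary 2.6 with $a=0$, $\gamma=1$ to force the difference to vanish. Your added remark about verifying that $t^{1-\beta}|x(t)-y(t)|$ is continuous at $t=0$ is a point the paper passes over silently, but the argument is the same.
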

\begin{proof}
We know
\begin{equation}
\begin{split}
|f(t,x)|&\leq|f(t,x)-f(t,0)|+|f(t,0)|\\
&\leq l(t)|x|+|f(t,0)|.\\
\end{split}
\end{equation}
Since $l(t)\in C(0,+\infty)\bigcap
L_{Loc}^{p}[0,+\infty)$ and $t^{1-\beta}|f(t,0)|\in L_{Loc}^{p}[0,+\infty)$, applying Corollary 3.8, we know that the fractional differential equation
(1.8) has at least one solution in $C_{1-\beta}(0,+\infty)$. We suppose that $x_{1}(t)$,
$x_{2}(t)$ are two global solutions of the equation (1.8). Then
\begin{equation}
\begin{split}
|x_{1}(t)-x_{2}(t)|&=\left|\frac{1}{\Gamma(\beta)}\int_{0}^{t}(t-s)^{\beta-1}(f(s,x_{1}(s))-f(s,x_{2}(s)))ds\right|\\
&\leq\frac{1}{\Gamma(\beta)}\int_{0}^{t}(t-s)^{\beta-1}l(s)|x_{1}(s)-x_{2}(s)|ds.\\
\end{split}
\end{equation}
By (2.27) in Corollary 2.6, we can get $x_{1}(t)=x_{2}(t)$. Thus the proof is
complete.
\end{proof}

\begin{rem}
In [14, Theorem 4.1], Trif presented the uniqueness result of the fractional differential equation (1.8) when
$f(t,x)=p(t)x+q(t)$, where $p\in C_{\alpha}(0,+\infty)$ and
$q\in C_{1-\beta}(0,+\infty)$ with $0\leq\alpha<\beta$.

Webb [15, Theorem 4.11] proved the uniqueness result of the equation (1.8) when $f(t,x)=t^{-\gamma}g(t,x)$, where $g(t,x)\leq M(1+x)$ and $|g(t,x)-g(t,y)|\leq L|x-y|$, $M>0$, $L>0$ and $0\leq\gamma<\beta$.

In [20, Theorem 4.4], Zhu proved that the fractional differential equation (1.8) has a unique solution in $C_{1-\beta}(0,+\infty)$ when $|f(t, x)-f(t, y)|\leq l(t)|x-y|$, where $t^{\beta-1}l(t)\in C(0,+\infty)\bigcap L_{Loc}^{p}[0,+\infty)$ and
$|f(t,0)|\in C(0,+\infty)\bigcap L_{Loc}^{p}[0,+\infty)$
$(p>\frac{1}{\beta})$.

From Theorem 3.10, we can easily get that Theorem 3.10 includes Theorem 4.1 in [14], Theorem 4.11 in [15] and Theorem 4.4 in [20]. We notice that our hypothesis in Theorem 3.10 is weaker than that imposed by Zhu [20, Theorem 4.4].
\end{rem}

\begin{exa}
\begin{equation}
\begin{cases}
D_{r}^{\frac{2}{3}}x(t)=t^{\frac{-3}{4}}x^{\frac{1}{2}}(t)+t^{\frac{-1}{2}}x(t),\\
\lim_{t\rightarrow 0^{+}}t^{\frac{1}{3}}x(t)=1.
\end{cases}
\end{equation}
\end{exa}
We know
\begin{equation}
t^{\frac{-3}{4}}x^{\frac{1}{2}}(t)+t^{\frac{-1}{2}}x(t)\leq (t^{\frac{-11}{12}}+t^{\frac{-5}{6}})
\left((t^{\frac{1}{3}}x(t))^{\frac{1}{2}}+t^{\frac{1}{3}}x(t)\right),
\end{equation}
let $l(t)=t^{\frac{-11}{12}}+t^{\frac{-5}{6}}$, then $t^{\frac{1}{3}}l(t)=t^{\frac{-7}{12}}+t^{\frac{-1}{2}}\in L^{p}_{Loc}[0,+\infty)$($p>\frac{3}{2}$), and
\begin{equation}
\lim_{t\rightarrow+\infty}\frac{t}{t^{\frac{1}{2}}+t}=1.
\end{equation}
Applying Theorem 3.7, we know that the equation (3.22) has at least one global solution in $C_{\frac{1}{3}}(0,+\infty)$.

\begin{exa}
\begin{equation}
\begin{cases}
D_{r}^{\frac{2}{3}}x(t)=t^{\frac{-2}{3}}\ln(1+x^{\frac{1}{2}}(t)),\\
\lim_{t\rightarrow 0^{+}}t^{\frac{1}{3}}x(t)=1.
\end{cases}
\end{equation}
\end{exa}
Using Corollary 3.8, we know
\begin{equation}
t^{\frac{-2}{3}}\ln(1+x^{\frac{1}{2}}(t))\leq t^{\frac{-2}{3}}x^{\frac{1}{2}}(t),
\end{equation}
and $t^{\frac{1}{6}}l(t)=t^{\frac{-1}{2}}\in C(0,+\infty)\bigcap L^{p}_{Loc}[0,+\infty)$($p>\frac{3}{2}$), then the equation (3.25) has at least one global solution on $(0,+\infty)$.

In Example 3.13, we know $$t^{\frac{-2}{3}}\ln(1+x^{\frac{1}{2}}(t))\leq\frac{t^{\frac{-2}{3}}(x(t)+1)}{2}.$$ Thus, Theorem 4.2 in [14], Theorem 4.11 in [15] or Theorem 4.3 in [20] can not be applied to Example 3.13 (see Remark 3.9).

\begin{exa}
\begin{equation}
\begin{cases}
D_{r}^{\frac{2}{3}}x(t)=t^{\frac{-1}{2}}\frac{x^{2}(t)}{1+x(t)}+t^{\frac{-3}{4}},\\
\lim_{t\rightarrow 0^{+}}t^{\frac{1}{3}}x(t)=1.
\end{cases}
\end{equation}
\end{exa}
We know $$|\frac{x^{2}}{1+x}-\frac{y^{2}}{1+y}|\leq|x-y|, \qquad  x,y\in[0,+\infty).$$
Using Theorem 3.10, since $t^{\frac{-1}{2}}\in C(0,+\infty)\bigcap
L_{Loc}^{p}[0,+\infty)$ and $t^{\frac{-5}{12}}\in C(0,+\infty)\bigcap
L_{Loc}^{p}[0,+\infty)$ $(p>\frac{3}{2})$, then the equation (3.27) has a unique solution on $(0,+\infty)$.

Since $t^{\frac{-3}{4}}\notin C_{\frac{1}{3}}(0,+\infty)$ and $t^{\frac{-3}{4}}\notin L^{p}[0,+\infty)$($p>\frac{3}{2}$), then Theorem 4.1 in [14], Theorem 4.11 in [15] or Theorem 4.4 in [20] can not be applied to Example 3.14 (see Remark 3.11).

\bigskip

\centerline{Acknowledgements}
The research was supported by Scientific Research Foundation of Nanjing
Institute of Technology(No: CKJB201508). I would like to express my sincere gratitude to the anonymous referee for his or her comments and suggestions.

\end{document}